\providecommand{\R}{\mathbb{R}}
\providecommand{\ee}{{\rm e}}
\newcommand{\step}[1]{\medskip\noindent\textbf{Step #1. }}
\newcommand{\substep}[1]{\medskip\noindent\textit{Substep #1. }}
\newcommand{\ignore}[1]{}
\newtheorem{proposition}{Proposition}
\newtheorem{theorem}{Theorem}
\newtheorem{lemma}{Lemma}
\newtheorem{corollary}{Corollary}
\theoremstyle{definition}
\newtheorem{definition}{Definition}
\newtheorem{remark}{Remark}
\newtheorem{assumption}{Assumption}
\author[P.~Bella]{Peter Bella}
\address{Mathematisches Institut 
 Universit\"at Leipzig\\
 Leipzig, 04103 Germany.}
\email{bella@math.uni-leipzig.de}
\author[M. Sch\"affner]{Mathias Sch\"affner}
\address{Mathematisches Institut 
 Universit\"at Leipzig\\
 Leipzig, 04103 Germany.}
\email{schaeffner@math.uni-leipzig.de}
\title{Quenched invariance principle for random walks among random degenerate conductances}
\begin{document}
\maketitle

\begin{abstract}
We consider the random conductance model in a stationary and ergodic environment. Under suitable moment conditions on the conductances and their inverse, we prove a quenched invariance principle for the random walk among the random conductances. The moment conditions improve earlier results of Andres, Deuschel and Slowik [Ann.\ Probab.] and are the minimal requirement to ensure that the corrector is sublinear everywhere. The key ingredient is an essentially optimal deterministic local boundedness result for finite difference equations in divergence form.

\medskip

\noindent
{\bf Keywords:} Random conductance model, invariance principle, stochastic homogenization, non-uniformly elliptic equations.
\end{abstract}


\section{Introduction}

\subsection{Setting of the problem and the main result}

In this paper we study the nearest neighbor random conductance model on the $d$-dimensional Euclidean lattice $(\mathbb Z^d,\mathbb B^d)$, for $d\geq3$. Here $\mathbb B^d$ is given by the set of nonoriented nearest neighbor bounds that is $\mathbb B^d:=\{\{x,y\}\,|\,x,y\in\mathbb Z^d,\,|x-y|=1\}$. 

We set $\Omega:=(0,\infty)^{\mathbb B^d}$ and call $\omega(\ee)$ the \textit{conductance} of the bond $\ee\in\mathbb B^d$ for every $\omega=\{\omega(\ee)\,|\,\ee\in \mathbb B^d\}\in\Omega$. To lighten the notation, for any $x,y\in\mathbb Z^d$, we set
\begin{equation*}
\omega(x,y)=\omega(y,x):=\omega(\{x,y\})\quad\forall \{x,y\}\in\mathbb B^d,\qquad \omega(\{x,y\})=0\quad\forall \{x,y\}\notin\mathbb B^d.
\end{equation*}
In what follows we consider random conductances that are distributed according to a probability measure $\mathbb P$  on $\Omega$ equipped with the $\sigma$-algebra $\mathcal F:=\mathcal B((0,\infty))^{\otimes \mathbb B^d}$ and we write $\mathbb E$ for the expectation with respect to $\mathbb P$. 

We introduce the family of \textit{space shifts} $\{\tau_x:\Omega\to\Omega\,|\,x\in\mathbb Z^d\}$ defined by
\begin{equation*}
 \tau_x\omega(\cdot):=\omega(\cdot+x)\qquad\mbox{where for any $\ee=\{\underline \ee,\overline\ee\}\in\mathbb B^d$, $\ee+x:=\{\underline e+x,\overline\ee+x\}\in\mathbb B^d$.}
\end{equation*}
For any fixed realization $\omega$, we study the reversible continuous time Markov chain, $X = \{X_t : t \geq0\}$, on $\mathbb Z^d$ with generator $\mathcal L^\omega$ acting on bounded functions $f :\mathbb Z^d\to\mathbb R$ as
\begin{equation}\label{def:Lomega}
 (\mathcal L^\omega f)(x):=\sum_{y\in\mathbb Z^d}\omega(x,y)(f(y)-f(x)).
\end{equation}
We emphasize at this point that $\mathcal L^\omega$ is in fact a finite-difference operator in divergence form, see \eqref{Lomegadiv} below. Following \cite{ADS15}, we denote by ${\bf P}_x^\omega$ the law of the process starting at the vertex $x\in\mathbb Z^d$ and by ${\bf E}_x^\omega$ the corresponding expectation. $X$ is called the \textit{variable speed random walk} (VSRW) since it waits at $x\in\mathbb Z^d$ an exponential time with mean $1/\mu^\omega(x)$, where $\mu^\omega(x)=\sum_{y\in\mathbb Z^d}\omega(x,y)$ and chooses its next position $y$ with probability $p^\omega(x,y):=\omega(x,y)/\mu^\omega(x)$.

\begin{assumption}\label{ass}
Assume that $\mathbb P$ satisfies the following conditions
\begin{enumerate}[(i)]
 \item (stationarity) $\mathbb P$ is stationary with respect to shifts, that is $\mathbb P\circ \tau_x^{-1}=\mathbb P$ for all $x\in\mathbb Z^d$.
 \item (ergodicity) $\mathbb P$ is ergodic, that is $\mathbb P[A]\in\{0,1\}$ for any $A\in\mathcal F$ such that $\tau_x(A)=A$ for all $x\in\mathbb Z^d$
 \item (moment condition) There exists $p,q\in(1,\infty]$ satisfying 
 \begin{equation}\label{eq:pq}
 \frac1p+\frac1q<\frac2{d-1}
 \end{equation}
 such that
 \begin{equation}\label{ass:moment}
 \mathbb E[\omega(\ee)^p]<\infty,\quad \mathbb E[\omega(\ee)^{-q}]<\infty\qquad\mbox{for any $\ee\in\mathbb B^d$}.
 \end{equation}
\end{enumerate}
\end{assumption}

The main result of the present paper is a quenched invariance principle for the process $X$ under Assumption~\ref{ass}.
\begin{definition}\label{def:qfclt}
Set $X_t^{(n)}:=\frac1n X_{n^2t}$, $t\geq0$. We say that a \textit{quenched functional CLT} (QFCLT) or \textit{quenched invariance principle} holds for $X$ if for $\mathbb P$-a.e.\ $\omega$ under ${\bf P}_0^\omega$, $X^{(n)}$ converges in law to a Brownian motion on $\R^d$ with covariance matrix $\Sigma^2=\Sigma\cdot \Sigma^t$. That is, for every $T>0$ and every bounded continuous function $F$ on the Skorokhod space $D([0,T],\R^d)$, setting $\psi_n={\bf E}_0^\omega[F(X^{(n)})]$ and $\psi_\infty={\bf E}_0^{\rm BM}[F(\sigma\cdot W)]$ with $(W,{\bf P}_0^{\rm BM})$ being a Brownian motion started at $0$, we have that $\psi_n\to\psi_\infty$, $\mathbb P$-a.s.
\end{definition}

\begin{theorem}[Quenched invariance principle]\label{T}
 Suppose $d\geq3$ and that Assumption~\ref{ass} is satisfied. Then the QFCLT holds for $X$ with a deterministic nondegenerate covariance matrix $\Sigma^2$. 
\end{theorem}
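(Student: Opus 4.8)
The plan is to follow the by-now classical route for quenched invariance principles for the random conductance model, pioneered in the uniformly elliptic case by Sidoravicius--Sznitman and extended to the degenerate setting by Andres--Deuschel--Slowik, and to reduce the whole statement to the single genuinely new ingredient announced in the abstract: an optimal deterministic local boundedness estimate for solutions of finite-difference equations in divergence form. Concretely, I would first rewrite $\mathcal L^\omega$ in divergence form \eqref{Lomegadiv}, introduce the \emph{corrector} $\chi\colon\Omega\times\mathbb Z^d\to\mathbb R^d$ as the solution in the Kipnis--Varadhan sense of the corrector equation associated with the generator of the environment process, and decompose $X_t = M_t + \chi(\tau_{X_t}\omega) - \chi(\omega)$, where $M_t$ is a martingale under the annealed measure. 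Stationarity, ergodicity, and the $L^q$-integrability of $\omega^{-1}$ (which guarantees that the Dirichlet form is non-degenerate in the right averaged sense) give, via the standard Kipnis--Varadhan/Maxwell--Woodroofe machinery, a quenched invariance principle for the martingale part $M^{(n)}_t = \tfrac1n M_{n^2 t}$ together with the non-degeneracy and determinism of the limiting covariance $\Sigma^2$; the non-degeneracy is where one uses that $\mathbb E[\omega(\ee)^{-q}]<\infty$ with $q>1$.

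The second and main step is to control the corrector term: one must show that for $\mathbb P$-a.e.\ $\omega$,
\begin{equation*}
\max_{|x|\le n}\frac{|\chi(\tau_x\omega) - \chi(\omega)|}{n}\xrightarrow[n\to\infty]{}0,
\end{equation*}
i.e.\ sublinearity of the corrector \emph{everywhere} (not just along the walk). The standard argument splits this into two pieces. First, an $L^1_{\mathrm{loc}}$-type sublinearity, $\tfrac1{n^d}\sum_{|x|\le n}|\chi(\tau_x\omega)|/n\to 0$, which follows from the spatial ergodic theorem applied to the stationary gradient field $\nabla\chi$ once one knows $\nabla\chi\in L^{1+\delta}$ in a suitable sense — and here the moment assumption \eqref{eq:pq} with both $p,q$ finite enters to make the relevant energy estimate and the interpolation/Meyers-type higher integrability work. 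Second, one upgrades this averaged smallness to an $L^\infty$ bound by a maximal-function/oscillation argument: writing the difference $\chi - (\text{affine})$ as a solution of the finite-difference equation $\nabla^*(\omega\nabla u) = \nabla^*(\omega\, b)$ on large balls, one invokes the new deterministic local boundedness result to bound $\sup_{B_{n/2}}|u|$ by a constant times an averaged $L^2$-norm of $\nabla u$ plus a norm of the inhomogeneity $b$ over $B_n$, with the constant depending only on certain \emph{averaged} (spatial) norms of $\omega$ and $\omega^{-1}$ that are finite $\mathbb P$-a.s.\ by the ergodic theorem precisely under \eqref{eq:pq}. Feeding the $L^1$-sublinearity into this Moser-type iteration yields $\sup_{B_{n}}|\chi|/n\to 0$.

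The third step is routine assembly: combine the martingale invariance principle with the everywhere-sublinearity of the corrector to conclude, for $\mathbb P$-a.e.\ $\omega$, convergence of $X^{(n)}$ under ${\bf P}_0^\omega$ to Brownian motion with covariance $\Sigma^2$, using a maximal inequality (e.g.\ via the heat-kernel or Nash-type bounds, or the ADS approach through $\ell^1$-sublinearity plus tightness) to control $\sup_{t\le T}|\chi(\tau_{X_{n^2 t}}\omega)|/n$ from the pointwise sublinearity; tightness of $X^{(n)}$ follows from that of $M^{(n)}$. The main obstacle is unquestionably Step 2, and within it the deterministic local boundedness estimate with the \emph{sharp} exponent condition $\tfrac1p+\tfrac1q<\tfrac2{d-1}$: the classical Moser iteration loses a factor of $\tfrac{d}{d-1}$ relative to what one would naively expect, and recovering the optimal $\tfrac{2}{d-1}$ (rather than $\tfrac{2}{d}$) requires a refined iteration — exploiting that one iterates Sobolev on \emph{spheres} of radius $d-1$ effectively, or equivalently using a clever choice of the exponents and test functions so that the degeneracy from $\omega^{-1}$ only costs a $(d-1)$-dimensional Sobolev embedding. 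This is the technical heart of the paper and everything else is adaptation of known arguments.
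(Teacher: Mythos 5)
Your overall plan coincides with the paper's: decompose $X$ via the corrector, obtain the QFCLT for the martingale part from \cite{ADS15}, upgrade the known $L^1$-sublinearity of the corrector to everywhere ($L^\infty$) sublinearity via a new deterministic local boundedness estimate, and assemble. You also correctly identify the technical heart — the sphere-Sobolev refinement (optimizing the cut-off and using the $(d-1)$-dimensional Sobolev inequality on $\partial B(k)$) that produces the improved exponent $\tfrac2{d-1}$ in place of $\tfrac2d$; this is exactly what the paper's Lemma~\ref{L:optimcutoff} does.

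There are, however, two imprecisions that obscure the actual mechanism. First, you describe the key regularity input as a local boundedness estimate for the \emph{Poisson} equation $\nabla^*(\omega\nabla u)=\nabla^*(\omega b)$, controlling $\sup|u|$ by a norm of $\nabla u$ plus a norm of the inhomogeneity $b$. That is not what the paper proves: Theorem~\ref{T1} applies only to $\mathcal L^\omega$-\emph{harmonic} functions and bounds $\max_{B(n)}|u|$ by $\|u\|_{\underline L^1(B(2n))}$ with no gradient term — and the paper flags explicitly (in the remark after Theorem~\ref{T1}) that the method does \emph{not} directly treat the inhomogeneous Poisson equation at the sharp exponent. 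The argument is rescued by the structural observation that the corrector equation has $f(x)=x\cdot e_j$ on the right-hand side, so subtracting the affine function $e_j\cdot(x-\lfloor n/m\rfloor z)$ centred at each subbox makes $u_j^z=\chi_j-e_j\cdot(x-\lfloor n/m\rfloor z)=-\Phi_j+\mathrm{const}$ \emph{exactly} $\mathcal L^\omega$-harmonic; the homogeneous estimate then pairs with the $L^1$-sublinearity of $\chi_j$ (plus the $O(n/m)$ size of the affine piece on each subbox) and the multiscale parameter $m$ is sent to infinity. An estimate of the form you write, with $\sup|u|$ controlled by an averaged $\nabla u$-norm, would not by itself give $\sup|\chi_j|/n\to0$ for $d\ge3$, since $\nabla\chi_j$ is stationary with a nontrivial $L^2(\omega)$-average; that style of bound is what the paper uses only in the separate two-dimensional argument in Appendix~B. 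Second, your attribution of the $L^1$-sublinearity of $\chi$ to the sharp moment assumption \eqref{eq:pq} and a Meyers-type higher-integrability step is not how the paper proceeds: $L^1$-sublinearity (Proposition~\ref{P:propprevious}(iii)) is imported directly from \cite{ADS15} and needs only first moments of $\omega$ and $\omega^{-1}$; the moment condition \eqref{eq:pq} enters solely to make the deterministic local boundedness constant $\Lambda^\omega$ almost surely bounded along the relevant scales via the ergodic theorem.
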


\begin{remark}\label{rem:CSRW}
Another natural process is given by the so called \textit{constant speed random walk (CSRW)} $Y$ which is defined via the generator $\mathcal L_Y^\omega$ 
 \begin{equation*}
 (\mathcal L_Y^\omega f)(x):=\frac1{\mu^\omega(x)}\sum_{y\in\mathbb Z^d}\omega(x,y)(f(y)-f(x)),
  \end{equation*}
where $\mu^\omega(x)=\sum_{y\in\mathbb Z^d}\omega(x,y)$. In contrast to the VSRW the CSRW waits on each vertex $x\in\mathbb Z^d$ an exponential time with mean $1$.  The invariance principle for the VSRW $X$ and Assumption~\ref{ass} imply also a QFCLT for $Y$ with a covariance matrix given by $\mathbb [\mu^\omega(0)]^{-1}\Sigma^2$ (where $\Sigma$ is as in Theorem~\ref{T}), see \cite[Remark~1.5]{ADS15}.
\end{remark}

Random walks among random conductances are one of the most studied examples of random walks in a random environment, see \cite{Biskup,Kumagai} for relatively recent overviews of the field. In \cite{DMFGW89} (see also \cite{KV86}) a weak FCLT, that is the convergence of $\psi_n$ to $\psi_\infty$ in Definition~\ref{def:qfclt} holds in $\mathbb P$-probability, for stationary and ergodic laws $\mathbb P$ with $\mathbb E[\omega(\ee)] < \infty$ is established. In the last two decades much attention has been devoted to obtain quenched FCLT. In \cite{SS04}, the quenched invariance principle is proven in the uniformly elliptic case, i.e.\  with the assumption that there exists $c\in(0,1]$ such that $\mathbb P[c\leq \omega(\ee)\leq c^{-1}])=1$ for all $\ee\in \mathbb B^d$, which corresponds to the case $p=q=\infty$ (see also an earlier result \cite{B93} valid only in $d=2$). Recently there is an increasing interest to relax the uniform ellipticity assumption. In the special case of i.i.d.\ conductances, that is when $\mathbb P$ is the product measure which includes e.g.\ percolation models, it is shown in \cite{ABDH13} (building on previous works \cite{BD10,BB07,M08,MP07,SS04}) that a QFCLT holds provided that $\mathbb P[\omega(\ee) > 0] > p_c$ with $p_c=p_c(d)$ being the bond percolation threshold. In particular no moment conditions such as \eqref{ass:moment} are needed. In the general ergodic situation it is known that at least first moments of $\omega$ and $\omega^{-1}$ are necessary for a QFCLT to hold (see \cite{BBT16} for an example where the QFCLT fails but \eqref{ass:moment} holds for any $p,q\in(0,1)$). In \cite{ADS15},  Andres, Deuschel and Slowik proved the conclusion of Theorem~\ref{T} under the moment condition \eqref{ass:moment} with the more restrictive relation  
\begin{equation}\label{eq:pqold}
\frac1p+\frac1q<\frac2{d}.
\end{equation}
The result of \cite{ADS15} was already extended in several directions: to the continuum case \cite{CD16} (for an earlier contribution with $q=\infty$, see \cite{FK97}), random walks on more general graphs \cite{DNS} and to dynamic situations \cite{ACDS18}, see also \cite{BR18,MO16}. Previous to \cite{ADS15}, Biskup \cite{Biskup} proved QFCLT under the minimal moment condition $p=q=1$ in two dimensions and thus we focus our attention to the case $d\geq3$. To the best of our knowledge Theorem~\ref{T} is the first quenched invariance principle in the general stationary \& ergodic setting under less restrictive moment condition compared to \eqref{eq:pqold} valid in $d\geq3$. Optimality of condition \eqref{eq:pq} in Theorem~\ref{T} is not clear to us, since in particular in \cite{BM15} a quenched invariance principle for diffusion in $\R^d$ with a locally integrable periodic potential is proven. However, we emphasize that condition \eqref{eq:pq} is essentially optimal for the everywhere sublinearity of the corrector, see Proposition~\ref{P:sublinlinfty} and Remark~\ref{rem:sublin}. The latter is of independent interest for stochastic homogenization of elliptic operators in divergence form with degenerate coefficients, for further recent results in that direction, see \cite{AD18,AN17,BFO18,NSS17}.

\subsection{Strategy}
The proof of Theorem~\ref{T} follows the classical approach to show an invariance principle and relies on a decomposition of the process $X$ into a martingale part and a remainder (see e.g.\ \cite{KV86}). General martingale theory (in particular \cite{Helland}) yields a quenched invariance principle for the martingale part and it remains to show that the remainder is negligible. A key insight in \cite{ADS15} was to apply deterministic elliptic regularity theory, in particular Moser's iteration argument \cite{Moser60,Moser61}, to control the remainder term. The main effort in the present contribution is to improve the deterministic part of the argument. Let us now be more precise (in what follows we use the notation introduced in Section~\ref{sec:notation} below). Following e.g.\ \cite{ADS15,Biskup}, we introduce harmonic coordinates, that is, we construct a corrector field $\chi:\Omega\times\mathbb Z^d\to\R^d$ such that
\begin{equation*}
 \Phi(\omega,x)=x-\chi(\omega,x)
\end{equation*} 
is $\mathcal L^\omega$-harmonic in the sense that for every $x\in\mathbb Z^d$ and $j\in\{1,\dots,d\}$ 
\begin{equation}\label{eq:correctointro}
 0=\mathcal L^\omega(\Pi_j-\chi_j)(x)=-\nabla^*(\omega\nabla(\Pi_j-\chi_j))(x),
\end{equation}
where $\Pi_j(y)=y\cdot e_j$ and $\chi_j(y)=\chi(y)\cdot e_j$ for every $y\in\mathbb Z^d$. The $\mathcal L^\omega$-harmonicity of $\Phi$  implies that
\begin{equation*}
 M_t:=\Phi(\omega,X_t)=X_t-\chi(\omega,X_t)
\end{equation*}
is a martingale under ${\bf P}_0^\omega$ for $\mathbb P$-a.e.\ $\omega$. The QFCLT of $M$ can e.g.\ be found in \cite{ADS15} under less restrictive assumptions compared to Assumption~\ref{ass}, see Proposition~\ref{P:propprevious} below. In order to establish the QFCLT for $X$, we show that for any $T>0$ and $\mathbb P$-a.e.\ $\omega$
\begin{equation*}
\sup_{t\in[0,T]}\frac1n|\chi(\omega,nX_t^{(n)})|\to0\quad\mbox{in ${\bf P}_0^\omega$-probability as $n\to\infty$}
\end{equation*}
see Proposition~\ref{P:correctionvanishes} below. In fact, we establish a much stronger statement: instead of proving sublinearity of $\chi$ along the path of the process $X$ we show sublinearity everywhere
\begin{equation}\label{eq:sublinintro}
\lim_{n\to\infty}\max_{x\in B(n)}\frac1n|\chi(\omega,x)|=0\qquad\mbox{for $\mathbb P$-a.e.\ $\omega$}
\end{equation}
see Proposition~\ref{P:sublinlinfty} below. The proof of \eqref{eq:sublinintro} relies on the following deterministic regularity result for $\mathcal L^\omega$-harmonic functions 
\begin{theorem}\label{T1}
Fix $d\geq3$, $\omega\in\Omega$ and let $p,q\in(1,\infty]$ be such that $\frac1p+\frac1q<\frac2{d-1}$. Then there exists $c=c(d,p,q)\in[1,\infty)$ such that solutions of $\nabla^*({\omega}\nabla u)=0$ in $\mathbb Z^d$ satisfy for every $y\in\mathbb Z^d$ and every $n\in \mathbb N$
\begin{equation}\label{est:T:boundl1}
 \max_{x\in B(y,n)}|u(x)|\leq c\Lambda^\omega( B(y,2n))^{p'\frac{\delta+1}{\delta}}\|u\|_{\underline L^{1}(B(y,2n))},
\end{equation}
where $\delta:=\frac{1}{d-1}-\frac1{2p}-\frac1{2q}>0$, $p':=\frac{p}{p-1}$ and for every bounded set $ S\subset\mathbb Z^d$ 
\begin{equation}\label{def:lambda}
\Lambda^\omega(S):=\|\omega\|_{\underline L^p(S)}\|\omega^{-1}\|_{\underline L^q(S)}.
\end{equation}
\end{theorem}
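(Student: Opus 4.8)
The plan is to run a Moser iteration adapted to the degenerate/unbounded setting, organized so that only $L^p$-control of $\omega$ and $L^q$-control of $\omega^{-1}$ enter, with the Sobolev exponent $\tfrac{d}{d-1}$ on the discrete lattice playing the role usually played by $\tfrac{d}{d-2}$ in the continuum. First I would set up the Caccioppoli inequality: testing the equation $\nabla^*(\omega\nabla u)=0$ against $\eta^2 |u|^{2\gamma} u$ (or a suitable truncation/regularization thereof, to handle the possibility $\gamma$ being such that powers are not smooth) yields, after the usual discrete Leibniz manipulations, a bound of the form
\begin{equation*}
\sum_{\ee \in B} \eta^2\, \omega(\ee)\, |\nabla (|u|^\gamma u)(\ee)|^2 \lesssim (\gamma+1)^2 \sum_{\ee} \omega(\ee)\, |\nabla \eta(\ee)|^2\, (|u|^{\gamma+1})^2(\ee_*),
\end{equation*}
where one must be careful that the discrete product rule produces cross terms evaluated at both endpoints of the edge — this is where the discrete setting genuinely differs from the continuum and forces one to control $\omega$ at neighboring vertices, ultimately why $d-1$ rather than $d$ appears. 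The weight $\omega$ on the left is then removed at the cost of $\|\omega^{-1}\|_{\underline L^q}$: by Hölder, $\|\nabla v\|_{\underline L^{2q/(q+1)}}^2 \leq \|\omega^{-1}\|_{\underline L^q}\, \|\omega^{1/2}\nabla v\|_{\underline L^2}^2$ with $v = |u|^\gamma u$, and the weight $\omega$ on the right is estimated by $\|\omega\|_{\underline L^p}$ via Hölder again, moving the square of $v$ into the exponent $\tfrac{2p}{p-1}=2p'$ on the gradient-of-cutoff side.

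Next I would invoke the discrete Sobolev inequality on $\mathbb Z^d$, which upgrades $\|\nabla v\|_{\underline L^{2q/(q+1)}}$ to $\|v\|_{\underline L^{s^*}}$ with $\tfrac{1}{s^*} = \tfrac{q+1}{2q} - \tfrac1d$; combined with the two Hölder losses this produces a genuine gain in integrability precisely when $\tfrac1p+\tfrac1q<\tfrac2{d-1}$, i.e. when $\delta>0$, and the per-step gain factor is controlled by a fixed power of $\Lambda^\omega(B(y,2n))$. The scheme is the standard one: starting from $\|u\|_{\underline L^1}$ (via a preliminary step one passes from $L^1$ to $L^{s_0}$ for some $s_0>1$, or one simply starts the iteration at an exponent slightly above $1$ using that $u$ solves the equation), iterate over a geometric sequence of exponents $s_k \to \infty$ and a shrinking sequence of balls $B(y,n) \subset B(y,r_k) \subset B(y,2n)$ with $\sum (r_k - r_{k+1}) \lesssim n$; the accumulated constants form a convergent product because $\sum_k \frac{1}{s_k}$ and $\sum_k \frac{\log(\gamma_k+1)}{s_k}$ converge, yielding
\begin{equation*}
\max_{B(y,n)} |u| \lesssim \Big(\prod_k \Lambda^\omega(B(y,2n))^{c/s_k}\Big)\, \|u\|_{\underline L^{s_0}(B(y,2n))} \lesssim \Lambda^\omega(B(y,2n))^{\theta}\, \|u\|_{\underline L^1(B(y,2n))},
\end{equation*}
and a careful bookkeeping of the exponents should pin down $\theta = p'\tfrac{\delta+1}{\delta}$ as claimed.

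The main obstacle I anticipate is not the iteration bookkeeping but the very first step — obtaining a Caccioppoli estimate whose constants depend on $\omega$ \emph{only} through $\Lambda^\omega$ on a comparable ball, with the sharp exponent $d-1$. In the continuum one freely uses $|\nabla(|u|^\gamma u)|^2 \sim (\gamma+1)^2 |u|^{2\gamma}|\nabla u|^2$ pointwise, but on the lattice $\nabla(fg)(\ee)$ mixes values of $f,g$ at $\underline\ee$ and $\overline\ee$, so the test-function computation leaves a residual term involving $\omega(\ee)\,|\nabla u(\ee)|\cdot|\nabla \eta(\ee)|\cdot(\text{product of }u\text{-powers at both endpoints})$; absorbing this requires an elementary-but-delicate inequality comparing $|u|^{\gamma}(\underline\ee)$ and $|u|^{\gamma}(\overline\ee)$ (since on an edge with large conductance the two can differ a lot) and it is exactly this step that both shrinks the admissible exponent region to $\tfrac1p+\tfrac1q<\tfrac2{d-1}$ and, I expect, is handled by a clever choice of the localization — e.g.\ working with the quantity $|u| \vee k$ for a level $k$, or splitting edges according to whether $u$ changes sign, as in the Andres–Deuschel–Slowik and De Giorgi–Nash–Moser adaptations. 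Once that lemma is in hand the rest is a controlled, if somewhat lengthy, exercise in tracking powers of $\Lambda^\omega$.
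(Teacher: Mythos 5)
There is a genuine gap, and it sits exactly where the improvement over \cite{ADS15} has to happen. Your scheme --- Caccioppoli with a test function $\eta^2|u|^{2\gamma}u$, H\"older against $\|\omega\|_{\underline L^p}$ and $\|\omega^{-1}\|_{\underline L^q}$, and then the \emph{bulk} discrete Sobolev inequality with gain $\tfrac{1}{s^*}=\tfrac{1}{s}-\tfrac1d$ --- is precisely the argument of Andres--Deuschel--Slowik, and if you track the exponents it closes only under $\tfrac1p+\tfrac1q<\tfrac2d$: the per-step gain condition is $s^*>2p'$ with $\tfrac1{s^*}=\tfrac{q+1}{2q}-\tfrac1d$, which reduces to $\tfrac1{2p}+\tfrac1{2q}<\tfrac1d$. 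Nothing in your outline produces the extra $\tfrac1d-\tfrac1{d-1}$ worth of integrability. Your diagnosis that the $d-1$ originates in the discrete product rule (cross terms at the two endpoints of an edge) is a misattribution: those endpoint issues are handled by elementary inequalities exactly as in \cite{ADS15} and cost only absolute constants; moreover the same $\tfrac2{d-1}$ threshold holds in the continuum (see \cite{BS19a}), so it cannot be a lattice artifact.

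The missing idea is the treatment of the cutoff. Instead of taking a fixed linear $\eta$ on the annulus $B(\sigma)\setminus B(\rho)$ and paying $(\sigma-\rho)^{-2}\sum_{\rm annulus}\omega\,|u^\alpha|^2$, one \emph{minimizes} the Caccioppoli right-hand side $\sum_\ee\omega(\ee)|v|(\ee)^2(\nabla\eta(\ee))^2$ over all admissible radial cutoffs (Lemma~\ref{L:optimcutoff}). The resulting one-dimensional variational problem is solved explicitly, giving a harmonic-mean-type expression over the spheres $S(k)$; after H\"older in $k$ this is controlled by $\|\omega\|_{L^p}$ on the annulus times $L^{2p'}$ norms of $v$ on the individual spheres $\partial B(k)$, and it is the Sobolev inequality \eqref{est:sobolev:sphere} on these $(d-1)$-dimensional slices --- with gain $\tfrac1{p_*}-\tfrac1{d-1}$ rather than $-\tfrac1d$ --- that converts the usable exponent from $\tfrac2d$ to $\tfrac2{d-1}$. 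Without this step (or an equivalent mechanism) your iteration cannot reach the stated range of $(p,q)$, and indeed by \cite{BK14} no purely formal bookkeeping can, since the threshold $\tfrac2{d-1}$ is essentially sharp. Your concluding reduction from $\|u\|_{\underline L^{2p'}}$ to $\|u\|_{\underline L^1}$ on the right-hand side is fine in spirit; it is the standard interpolation-and-iteration argument carried out in Corollary~\ref{C:est:gamma}.
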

\begin{remark}
A continuum version of Theorem~\ref{T1} was recently proven by the authors of the present paper in \cite{BS19a}. In the continuum case relation $\frac1p+\frac1q<\frac2{d-1}$ is essentially optimal for local boundedness (see \cite{FSS98}) and so it is in the discrete setting considered here, see Remark~\ref{rem:sublin} below. In \cite{ADS15} a version of Theorem~\ref{T1} is proven for solutions of the Poisson equation
\begin{equation}\label{eq:poisson}
\nabla^*(\omega \nabla u)=\nabla^*(\omega\nabla f)
\end{equation}
on rather general weighted graphs but under the more restrictive relation $\frac1p+\frac1q<\frac2d$, see \cite[Theorem~3.7]{ADS15} (for related classical results in the continuum see \cite{MS68,T71,T73}). This regularity statement is then applied in \cite{ADS15} to the corrector equation \eqref{eq:correctointro} to ensure \eqref{eq:sublinintro}. Our method does not directly apply to solutions of \eqref{eq:poisson} but due to the specific form of the right-hand side in the corrector equation \eqref{eq:correctointro}, i.e.\  $f(x)=x\cdot e_j$, we are able to deduce from Theorem~\ref{T1} the needed sublinearity of the corrector.
\end{remark}

\begin{remark}
 In \cite{BS19a}, we also establish Harnack inequality for non-negative solutions $u$ and we expect that this can be extended to the discrete case, too. In \cite{ADS16}, Andres, Deuschel and Slowik establish elliptic and parabolic versions of Harnack inequality for the CSRW, see Remark~\ref{rem:CSRW}, on weighted graphs under moment conditions \eqref{ass:moment} with $\frac1p+\frac1q<\frac2d$. From the parabolic version they deduced a quenched local limit theorem and showed that condition $\frac1p+\frac1q<\frac2d$ is essentially optimal for that result. It is an interesting question if the methods developed here can be used to derive parabolic Harnack inequality and local limit theorems for the VSRW under less restrictive relations between the exponents $p$ and $q$ compared to the CSRW. 
\end{remark}

\subsection{Notation}\label{sec:notation}

\begin{itemize}
\item (Sets and $L^p$ spaces) For $y\in\mathbb Z^d$, $n\in\mathbb N$, we set $B(y,n):=y+([-n,n]\cap\mathbb Z)^d$ with the shorthand $B(n)=B(0,n)$. For any $S\subset \mathbb Z^d$ we denote by $S_{\mathbb B^d}\subset \mathbb B^d$ the set of bonds for which both end-points are contained in $S$, i.e.\ $S_{\mathbb B^d}:=\{\ee=\{\underline \ee,\overline \ee\}\in\mathbb B^d\,|\, \underline \ee,\overline\ee \in S\}$. For any $S\subset\mathbb Z^d$, we set $\partial S:=\{x\in S\;|\; \exists y\in\mathbb Z^d\setminus S\mbox{ s.t. } \{x,y\}\in\mathbb B^d\}$. Given $p\in(0,\infty)$, $S\subset \mathbb Z^d$, we set for any $f:\mathbb Z^d\to\R^d$ and $F:\mathbb B^d\to\R$ 
\begin{equation*}
 \|f\|_{L^p(S)}:=\left(\sum_{x\in S}|f(x)|^p\right)^\frac1p,\quad \|F\|_{L^p(S_{\mathbb B^d})}:=\left(\sum_{\ee\in S_{\mathbb B^d}}|F(\ee)|^p\right)^\frac1p,
\end{equation*}
and $\|f\|_{L^\infty(S)}=\sup_{x\in S}|f(x)|$. Moreover, normalized versions of $\|\cdot||_{L^p}$ are defined for any finite subset $S\subset \mathbb Z^d$ and $p\in(0,\infty)$ by
\begin{equation*}
 \|f\|_{\underline L^p(S)}:=\left(\frac1{|S|}\sum_{x\in S}|f(x)|^p\right)^\frac1p,\quad \|F\|_{\underline L^p(S_{\mathbb B^d})}:=\left(\frac1{|S_{\mathbb B^d}|}\sum_{\ee\in S_{\mathbb B^d}}|F(\ee)|^p\right)^\frac1p,
\end{equation*}
where $|S|$ and $|S_{\mathbb B^d}|$ denote the cardinality of $S$ and $S_{\mathbb B^d}$, respectively. Throughout the paper we drop the subscript in $S_{\mathbb B^d}$ if the context is clear.
\item (discrete calculus) For any bond $\ee\in\mathbb B^d$, we denote by $\underline \ee,\overline \ee\in\mathbb Z^d$ the (unique) vertices satisfying $\ee=\{\underline \ee,\overline \ee\}$ and $\overline \ee - \underline \ee\in\{e_1,\dots,e_d\}$. For $f:\mathbb Z^d\to\R$, we define its \textit{discrete derivative} as
\begin{equation*}
 \nabla f:\mathbb B^d\to\R,\qquad \nabla f(\ee):=f(\overline \ee)-f(\underline \ee).
\end{equation*} 
For $f,g:\mathbb Z^d\to\R$ the following discrete product rule is valid
\begin{align}\label{chainrule}
 \nabla (fg)(\ee)=f(\overline \ee)\nabla g(\ee)+g(\underline \ee)\nabla f(\ee)=f(\ee)\nabla g(\ee)+g(\ee)\nabla f(\ee),
\end{align} 
where we use for the last equality the convenient identification of a function $h:\mathbb Z^d\to\R$ with the function $h:\mathbb B^d\to \R$ defined by the corresponding arithmetic mean
\begin{equation*}
 h(\ee):=\frac12 (h(\overline \ee)+h(\underline \ee)).
\end{equation*}
The \textit{discrete divergence} is defined for every $F:\mathbb B^d\to\R$ as
\begin{equation*}
\nabla^*F(x):=\sum_{\ee\in\mathbb B^d\atop \overline \ee=x}F(\ee)-\sum_{\ee\in\mathbb B^d\atop \underline \ee=x}F(\ee)=\sum_{i=1}^d\left(F(\{x-e_i,x\})-F(\{x,x+e_i\})\right).
\end{equation*}
Note that for every $f:\mathbb Z^d\to\R$ that is non-zero only on finitely many vertices and every $F:\mathbb B^d\to\R$ it holds
\begin{equation}\label{sumbyparts}
 \sum_{\ee\in\mathbb B^d}\nabla f(\ee)F(\ee)=\sum_{x\in\mathbb Z^d}f(x)\nabla^*F(x).
\end{equation}
Finally, we observe that the generator $\mathcal L^\omega$ defined in \eqref{def:Lomega} can be written as a second order finite-difference operator in divergence form, in particular
\begin{equation}\label{Lomegadiv}
\forall u:\mathbb Z^d\to\R\qquad \mathcal L^\omega u(x)=-\nabla^*(\omega\nabla u)(x)\quad\mbox{for all $x\in\mathbb Z^d$.}
\end{equation}

\end{itemize}

\section{The quenched invariance principle}

In this section we proof Theorem~\ref{T}. As mentioned above we follow a well established strategy and decompose the process $X$ such that $M_t=X_t-\chi(\omega,X_t)$ is a martingale under ${\bf P}_0^\omega$ for $\mathbb P$-a.e.\ $\omega$. It is already known that under Assumption~\ref{ass} the martingale part $M$ satisfies a QFCLT and it is left to show that the remainder $\chi(\omega,X_t)$ vanishes in a suitable sense. In Section~\ref{subsec:old}, we recall the construction of the corrector from \cite{ADS15} and state the needed known results for $M$ and $\chi$. In Section~\ref{subsec:sublin}, we use Theorem~\ref{T1} to prove that the corrector is sublinear everywhere.

\subsection{Harmonic embedding and the corrector}\label{subsec:old}

The construction of the corrector and the invariance principle for the martingale part can be found in the literature, see e.g.\ \cite{ADS15,Biskup}. For convenience we recall the needed results
\begin{definition}
A random field $\Psi:\Omega\times \mathbb Z^d\to\R$ satisfies the \textit{cocycle property} if for $\mathbb P$-a.e.\ $\omega$
\begin{equation*}
 \Psi(\tau_x\omega,y-x)=\Psi(\omega,y)-\Psi(\omega,x)\qquad\mbox{for all $x,y\in\mathbb Z^d$}.
\end{equation*}
We denote by $L^2_{\rm cov}$ the set of functions $\Psi:\Omega\times \mathbb Z^d\to\R$ satisfying the cocycle property such that
\begin{equation*}
\|\Psi\|_{L^2_{\rm cov}}^2:=\mathbb E\biggl[\sum_{x\in\mathbb Z^d}\omega(0,x)\Psi(\omega,x)^2\biggr]<\infty. 
\end{equation*}
\end{definition}
Note that
\begin{lemma}
 $L^2_{\rm cov}$ is a Hilbert-space.  
\end{lemma}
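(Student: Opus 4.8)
The plan follows the standard two-step scheme: first show that $L^2_{\rm cov}$ is an inner product space, then that it is complete. For the first step I would equip $L^2_{\rm cov}$ with the symmetric bilinear form $\langle\Psi,\Phi\rangle_{L^2_{\rm cov}}:=\mathbb{E}\bigl[\sum_{x\in\mathbb{Z}^d}\omega(0,x)\Psi(\omega,x)\Phi(\omega,x)\bigr]$, which is finite by the Cauchy--Schwarz inequality (the sum over $x$ only sees the $2d$ neighbours of $0$) and induces $\|\cdot\|_{L^2_{\rm cov}}$. Definiteness, after identifying random fields that agree $\mathbb{P}$-a.s., comes from positivity of the conductances together with the cocycle property and stationarity: $\|\Psi\|_{L^2_{\rm cov}}=0$ forces $\Psi(\cdot,\pm e_i)=0$ $\mathbb{P}$-a.s.\ for all $i$, hence $\Psi(\tau_x\cdot,\pm e_i)=0$ $\mathbb{P}$-a.s.\ for every $x\in\mathbb{Z}^d$ by stationarity, and after intersecting over the countably many shifts, telescoping the cocycle relation along a nearest-neighbour path from $0$ to an arbitrary $y$ yields $\Psi(\cdot,y)=0$ $\mathbb{P}$-a.s. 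The real content is completeness.

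The structural observation I would exploit is that a cocycle $\Psi$ is determined by the $d$ random variables $\Psi(\cdot,e_i)$, $i=1,\dots,d$ (any $\Psi(\omega,y)$ is a finite sum of increments $\pm\Psi(\tau_x\omega,e_i)$ obtained by telescoping), and that combining $\Psi(\omega,-e_i)=-\Psi(\tau_{-e_i}\omega,e_i)$ with the identity $\omega(0,-e_i)=(\tau_{-e_i}\omega)(\{0,e_i\})$ and stationarity gives $\|\Psi\|_{L^2_{\rm cov}}^2=2\sum_{i=1}^d\mathbb{E}[\omega(0,e_i)\Psi(\omega,e_i)^2]$. Hence $T\colon\Psi\mapsto(\sqrt{\omega(0,e_i)}\,\Psi(\cdot,e_i))_{i=1}^d$ is, up to the constant $\sqrt2$, an isometry of $L^2_{\rm cov}$ into the Hilbert space $\bigoplus_{i=1}^d L^2(\Omega,\mathbb{P})$, so completeness of $L^2_{\rm cov}$ is equivalent to closedness of the range of $T$.

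Given a Cauchy sequence $(\Psi_n)$, each $(\sqrt{\omega(0,e_i)}\,\Psi_n(\cdot,e_i))_n$ is therefore Cauchy in $L^2(\mathbb{P})$ and converges to some $h_i\in L^2(\mathbb{P})$; passing to a subsequence I may also assume $\mathbb{P}$-a.s.\ convergence, and since $\omega(0,e_i)\in(0,\infty)$ always, $g_i:=h_i/\sqrt{\omega(0,e_i)}$ is a well-defined measurable $\mathbb{P}$-a.s.\ limit of $\Psi_n(\cdot,e_i)$. I would then reconstruct the limit by telescoping: on the $\mathbb{P}$-full, shift-invariant set $A:=\bigcap_{x\in\mathbb{Z}^d}\bigcap_{i=1}^d\tau_x^{-1}\{\Psi_n(\cdot,e_i)\to g_i\}$ every increment $\Psi_n(\tau_x\omega,\pm e_i)$ converges, so $\Psi_n(\omega,y)$ converges for each $y$ to a limit $\Psi(\omega,y)$ that is independent of the chosen path (because each $\Psi_n(\omega,\cdot)$ is), the cocycle relation passes to the limit on $A$, and setting $\Psi:=0$ off $A$ keeps it true $\mathbb{P}$-a.s. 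Finally $\Psi(\cdot,e_i)=g_i$ $\mathbb{P}$-a.s., so $T\Psi=(h_i)_i$ and the isometry identity gives $\Psi\in L^2_{\rm cov}$ together with $\|\Psi_n-\Psi\|_{L^2_{\rm cov}}^2=2\sum_i\mathbb{E}[\omega(0,e_i)(\Psi_n(\cdot,e_i)-g_i)^2]\to0$; since $(\Psi_n)$ was Cauchy, the whole sequence converges to $\Psi$, which proves completeness.

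The argument is elementary; the one point that needs care, and which I regard as the main obstacle, is the passage from $L^2(\mathbb{P})$-convergence of the finitely many coordinate increments $\Psi_n(\cdot,e_i)$ to a genuinely well-defined limiting cocycle --- one must produce a single $\mathbb{P}$-null set, stable under all lattice shifts, off which all the countably many increments $\Psi_n(\tau_x\cdot,e_i)$ converge simultaneously, so that path-independence and the cocycle identity are inherited in the limit. Everything else is routine bookkeeping with stationarity.
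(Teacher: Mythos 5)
Your proof is correct, and since the paper states the lemma without any argument (deferring implicitly to \cite{ADS15}), there is no internal proof to compare against; what you wrote is essentially the standard argument used in that reference. The key device is exactly the right one: using $\Psi(\omega,-e_i)=-\Psi(\tau_{-e_i}\omega,e_i)$, $\omega(0,-e_i)=(\tau_{-e_i}\omega)(0,e_i)$ and stationarity to obtain
$\|\Psi\|_{L^2_{\rm cov}}^2=2\sum_{i=1}^d\mathbb{E}\bigl[\omega(0,e_i)\Psi(\cdot,e_i)^2\bigr]$,
so that $T\colon\Psi\mapsto\bigl(\sqrt{\omega(0,e_i)}\,\Psi(\cdot,e_i)\bigr)_{i=1}^d$ is a constant multiple of an isometry into $L^2(\Omega,\mathbb{P})^d$, and then showing the range is closed by reconstructing the limiting cocycle from the $d$ coordinate limits. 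One cosmetic remark: in defining the shift-invariant full set $A$ on which you telescope, you should intersect not only $\bigcap_{x,i}\tau_x^{-1}\{\Psi_n(\cdot,e_i)\to g_i\}$ but also the (countably many) full-measure sets on which each $\Psi_n$ actually satisfies the cocycle identity, so that the expansion of $\Psi_n(\omega,y)$ into increments $\pm\Psi_n(\tau_x\omega,e_i)$ and the path-independence hold for every $n$ simultaneously; with that added intersection the passage to the limit of the cocycle relation is airtight. Everything else, including the positive-definiteness argument via stationarity and telescoping and the final upgrade from subsequence convergence to full-sequence convergence using the Cauchy property, is correct.
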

A function $\phi:\Omega\to\R$ is called \textit{local} if it depends on the value of $\omega\in\Omega$ (recall $\Omega=(0,\infty)^{\mathbb B^d}$) at finitely many bonds $\ee\in\mathbb B^d$. The horizontal derivative $D\phi:\Omega\times\mathbb Z^d\to\R$ of $\phi$ is defined by
\begin{equation*}
D\phi(\omega,x)=\phi(\tau_x\omega)-\phi(\omega),\qquad x\in\mathbb Z^d.
\end{equation*}
We define the subspace $L^2_{\rm pot}$ of potential random fields as
\begin{equation*}
 L^2_{\rm pot}:={\rm cl}\{D\phi\,|\,\phi:\Omega\to\R\,\mbox{local}\}\quad\mbox{in $L^2_{\rm cov}$}
\end{equation*}
and the subspace $L_{\rm sol}^2$, of solinoidal random fields, as the orthogonal complement of $L_{\rm pot}^2$ in $L_{\rm cov}^2$.

The corrector is now constructed as a suitable projection. For this we introduce the position field $\Pi:\Omega\times \mathbb Z^d\to\R^d$ with $\Pi(\omega,x)=x$ for all $x\in\mathbb Z^d$ and $\omega\in\Omega$. Set $\Pi_j:=\Pi\cdot e_j$ and observe that $\Pi_j$ satisfies the cocycle property and $ \|\Pi_j\|_{L_{\rm cov}^2}^2=2\mathbb E[\omega(0,e_j)]<\infty$. Hence, $\Pi_j\in L^2_{\rm cov}$ and we define $\chi_j\in L_{\rm pot}^2$ and $\Phi_j\in L_{\rm sol}^2$ by 
\begin{equation}\label{eq:defchij}
\Pi_j=\chi_j+\Phi_j\in L_{\rm pot}^2\oplus L_{\rm sol}^2.
\end{equation} 
Finally, we define the corrector $\chi=(\chi_1,\dots,\chi_d):\Omega\times\mathbb Z^d\to \R^d$ and the process $M_t$ as
\begin{equation*}
 M_t:=\Phi(\omega,X_t)=X_t-\chi(\omega,X_t).
\end{equation*}
The needed properties of $M_t$, $\Phi$ and $\chi$ are gathered in the following
\begin{proposition}[\cite{ADS15}]\label{P:propprevious}
 Let $d\geq2$ and suppose that part (i) and (ii) of Assumption~\ref{ass} are satisfied. Moreover, suppose that $\mathbb E[\omega(\ee)]<\infty$ and $\mathbb E [\omega(\ee)^{-1}]<\infty$ for every $\ee\in \mathbb B^d$. Then there exists $\Omega_1\subset\Omega$ with $\mathbb P(\Omega_1)=1$ such that,
 \begin{enumerate}[(i)]
 \item ($\mathcal L^\omega$-harmonicity of $\Phi$) for all $\omega\in\Omega_1$
 \begin{equation}\label{eq:phiharmonic}
 \mathcal L^\omega\Phi(x)=\sum_{y\in\mathbb Z^d}\omega(x,y)(\Phi(\omega,y)-\Phi(\omega,x))=0\in\R^d, \qquad \Phi(\omega,0)=0\in\R^d. 
 \end{equation}
 \item (QFCLT for $M$) Set $M_t^{(n)}:=\frac1n M_{n^2t}$, $t\geq0$. For all $\omega\in\Omega_1$, the sequence $\{M^{(n)}\}$ converges in law in the Skorokhod topology to a Brownian motion with a nondegenerate covariance matrix $\Sigma^2$ given by
 \begin{equation*}
  \Sigma_{ij}^2=\mathbb E\biggl[\sum_{x\in\mathbb Z^d}\omega(0,x)\Phi_i(\omega,x)\Phi_j(\omega,x)\biggr].
 \end{equation*}
 \item ($L^1$-sublinearity of $\chi$) For all $\omega\in\Omega_1$ and any $j\in\{1,\dots,d\}$,
 \begin{equation}\label{eq:sublinl1}
 \lim_{n\to\infty}\frac1n\|\chi_j(\omega,\cdot)\|_{\underline L^1(B(n))}=0.
 \end{equation}
 \end{enumerate}
\end{proposition}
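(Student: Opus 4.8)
The statement is essentially a compilation of known facts about the corrector and the harmonic coordinates, originally due to \cite{ADS15} (and going back in spirit to \cite{KV86,DMFGW89}), so the proof is a matter of recalling the construction above and citing/reproducing the standard arguments. The plan is to work on the probability space $(\Omega,\mathcal F,\mathbb P)$ with the ergodic shift $\{\tau_x\}$, exploit the orthogonal decomposition $\Pi_j=\chi_j+\Phi_j$ in $L^2_{\rm cov}=L^2_{\rm pot}\oplus L^2_{\rm sol}$ from \eqref{eq:defchij}, and extract a single full-measure set $\Omega_1$ on which all three statements hold simultaneously. Throughout one only uses stationarity, ergodicity, and the first moments $\mathbb E[\omega(\ee)]<\infty$, $\mathbb E[\omega(\ee)^{-1}]<\infty$; the stronger moment condition \eqref{eq:pq} plays no role here.

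\textbf{Step 1: $\mathcal L^\omega$-harmonicity of $\Phi$.} First I would show that $\Phi_j\in L^2_{\rm sol}$ translates, $\mathbb P$-a.s., into $\mathcal L^\omega$-harmonicity of the realized function $x\mapsto\Phi_j(\omega,x)$. The key point is the characterization of the solenoidal subspace: $\Psi\in L^2_{\rm sol}$ iff $\Psi\perp D\phi$ for every local $\phi:\Omega\to\R$, which by a direct computation using the definition of $D\phi$ and a discrete summation by parts on $\Omega$ is equivalent to the ``balanced'' condition $\mathbb E\big[\phi(\omega)\sum_{y}\omega(0,y)\Psi(\omega,y)\big]=0$ for all local $\phi$, i.e.\ $\mathbb E\big[\sum_y\omega(0,y)\Psi(\omega,y)\,\big|\,\mathcal F\big]$ is, after shifting, the statement that $x\mapsto\Psi(\omega,x)$ is $\mathcal L^\omega$-harmonic for $\mathbb P$-a.e.\ $\omega$. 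Applying this to $\Psi=\Phi_j$ and using the cocycle property to turn the $\omega$-stationary identity into the pointwise identity $\sum_y\omega(x,y)(\Phi_j(\omega,y)-\Phi_j(\omega,x))=0$ for all $x\in\mathbb Z^d$ gives \eqref{eq:phiharmonic} on a set $\Omega_1^{(1)}$ of full measure; the normalization $\Phi(\omega,0)=0$ follows since $\chi_j,\Phi_j$ can be chosen with $\chi_j(\omega,0)=\Phi_j(\omega,0)=0$ by the cocycle property. Since $\mathcal L^\omega$ applied to the identity map $\Pi$ equals the constant vector $\nabla^*\omega$, which need not vanish, the whole content of the corrector construction is precisely to subtract off $\chi$ so that the remainder is harmonic; I would make this explicit.

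\textbf{Step 2: QFCLT for the martingale $M$.} Next, $\mathcal L^\omega$-harmonicity of $\Phi$ implies that $M_t=\Phi(\omega,X_t)$ is a (vector-valued) martingale under $\mathbf P_0^\omega$: its generator applied to each coordinate vanishes by \eqref{eq:phiharmonic}. One computes its quadratic variation: the predictable bracket of $M^{(n)}_{i}$ and $M^{(n)}_{j}$ at time $t$ is $\frac1{n^2}\int_0^{n^2 t}\sum_y\omega(X_s,y)\big(\Phi_i(\omega,y)-\Phi_i(\omega,X_s)\big)\big(\Phi_j(\omega,y)-\Phi_j(\omega,X_s)\big)\,ds$, which by the cocycle property is $\frac1{n^2}\int_0^{n^2t} g_{ij}(\tau_{X_s}\omega)\,ds$ with $g_{ij}(\omega)=\sum_x\omega(0,x)\Phi_i(\omega,x)\Phi_j(\omega,x)\in L^1(\mathbb P)$. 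The environment seen from the particle is a stationary ergodic process under the annealed law and, by the general theory (Kipnis–Varadhan, \cite{KV86}, and the transfer to the quenched statement as in \cite{DMFGW89}), the ergodic theorem gives $\mathbf P_0^\omega$-a.s. convergence of this time average to $t\,\Sigma_{ij}^2$ with $\Sigma_{ij}^2=\mathbb E[g_{ij}]$, for $\mathbb P$-a.e.\ $\omega$. A Lindeberg-type jump-size bound (again via the ergodic theorem applied to $\sum_x\omega(0,x)|\Phi(\omega,x)|^2\mathbf 1_{|\Phi(\omega,x)|>\eta n}$, using $g_{jj}\in L^1$) verifies the hypotheses of the martingale FCLT \cite{Helland}, yielding convergence of $M^{(n)}$ in the Skorokhod topology to Brownian motion with covariance $\Sigma^2$. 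Nondegeneracy of $\Sigma^2$ is the statement $v\cdot\Sigma^2 v>0$ for $v\neq0$, equivalent to $v\cdot\Phi\neq0$ in $L^2_{\rm cov}$, which holds because $v\cdot\Phi=v\cdot\Pi-v\cdot\chi$ and $v\cdot\Pi\notin L^2_{\rm pot}$ for $v\neq 0$ (else $\Pi\cdot v$ would be a gradient of a stationary function, forcing it to be bounded, contradicting linear growth). I expect this to be the technically heaviest step, but it is entirely standard; I would simply cite \cite[Section~3]{ADS15} for the details and indicate which moment assumptions enter.

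\textbf{Step 3: $L^1$-sublinearity of $\chi$.} Finally, $\chi_j\in L^2_{\rm pot}$ gives \eqref{eq:sublinl1} by the standard sublinearity argument for cocycles in $L^2_{\rm pot}$: since $\chi_j$ satisfies the cocycle property and $\|\chi_j\|_{L^2_{\rm cov}}<\infty$, one has $\chi_j(\omega,x)=\sum_{\text{path }0\to x}\big(\cdots\big)$ built from the stationary increments $D\phi^{(k)}$ of an approximating sequence of local functions; applying the spatial ergodic theorem to these stationary increments and a telescoping/averaging argument (as in \cite[Proposition~2.5]{Biskup} or the proof in \cite{ADS15}) shows $\frac1n\|\chi_j(\omega,\cdot)\|_{\underline L^1(B(n))}\to 0$ for $\mathbb P$-a.e.\ $\omega$ on a set $\Omega_1^{(3)}$. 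Setting $\Omega_1:=\Omega_1^{(1)}\cap\Omega_1^{(2)}\cap\Omega_1^{(3)}$, which still has full measure, completes the proof. The main obstacle, such as it is, is bookkeeping: ensuring all three almost-sure statements can be realized on one common $\Omega_1$ and correctly tracking the minimal moment hypotheses, but no new idea beyond \cite{ADS15,Biskup,KV86,Helland} is needed.
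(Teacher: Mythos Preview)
Your proposal is correct and follows exactly the standard line of argument from \cite{ADS15,Biskup,KV86,Helland}. Note, however, that the paper does not actually reproduce any of this: its entire treatment of Proposition~\ref{P:propprevious} is the single sentence ``Statement (i) is contained in \cite[Proposition~2.3]{ADS15}, (ii) is contained in \cite[Proposition~2.5]{ADS15}, and (iii) can be easily deduced from \cite[Proposition~2.9]{ADS15}.'' So your sketch is strictly more detailed than what the paper provides, and there is no discrepancy in approach to report.
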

Statement (i) is contained in \cite[Proposition~2.3]{ADS15}, (ii) is contained \cite[Proposition~2.5]{ADS15}, and (iii) can be easily deduced from \cite[Proposition~2.9]{ADS15}.

\subsection{$L^\infty$-sublinearity of the corrector}\label{subsec:sublin}

In this section we improve the sublinearity of the corrector in $L^1$, see Proposition~\ref{P:propprevious} part~(iii), to sublinearity in the $L^\infty$-sense. This is content of the following
\begin{proposition}\label{P:sublinlinfty}
Let $d\geq3$ and suppose that Assumptions~\ref{ass} is satisfied. Then, for any $L\in\mathbb N$ and $j\in\{1,\dots,d\}$ 
\begin{equation}\label{sublinerlinfty}
\lim_{n\to\infty}\frac1n\max_{x\in B(Ln)}|\chi_j(\omega,x)|=0\qquad\mbox{$\mathbb P$-a.s.}
\end{equation}
\end{proposition}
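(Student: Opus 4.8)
The plan is to combine the deterministic local boundedness estimate of Theorem~\ref{T1} with the $L^1$-sublinearity from Proposition~\ref{P:propprevious}(iii) and an ergodic-theorem argument controlling the random prefactor $\Lambda^\omega$. Observe that the corrector component $u:=\Pi_j-\chi_j$ (as a function of $x$ for fixed $\omega$) is $\mathcal L^\omega$-harmonic by construction, i.e.\ $\nabla^*(\omega\nabla u)=0$ on $\mathbb Z^d$. Since $|\chi_j(\omega,x)|\le|u(x)|+|x_j|$ and $|x_j|\le Ln$ on $B(Ln)$, it suffices to show $\tfrac1n\max_{x\in B(Ln)}|u(x)|\to0$. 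Applying Theorem~\ref{T1} on balls $B(y,n)$ for suitable centers $y$ (so that $B(Ln)$ is covered by finitely many such balls, say $y$ in a $\tfrac n2$-net of $B(Ln)$), gives
\begin{equation*}
\max_{x\in B(y,n)}|u(x)|\le c\,\Lambda^\omega(B(y,2n))^{p'\frac{\delta+1}{\delta}}\|u\|_{\underline L^1(B(y,2n))}.
\end{equation*}
Since $B(y,2n)\subset B((2L+2)n)$, one has $\|u\|_{\underline L^1(B(y,2n))}\lesssim_{L,d}\|u\|_{\underline L^1(B((2L+2)n))}$, and the latter is $o(n)$ by Proposition~\ref{P:propprevious}(iii) (and $\|\Pi_j\|_{\underline L^1}=O(n)$ trivially, but more precisely $\tfrac1n\|u\|_{\underline L^1(B(Cn))}$ need not vanish — I instead keep $\Pi_j$ separate: write $u=\Pi_j-\chi_j$, apply linearity of the bound? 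No—Theorem~\ref{T1} is for harmonic $u$, so I must keep $u$ whole; then bound $\tfrac1n\|u\|_{\underline L^1}\le \tfrac1n\|\chi_j\|_{\underline L^1}+\tfrac1n\|\Pi_j\|_{\underline L^1}$, and the second term is merely bounded, not small).

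This forces a two-scale refinement: rather than taking $n$ as the inner radius, I would apply Theorem~\ref{T1} on balls of a smaller radius $m=m(n)$ with $m/n\to0$ but $m\to\infty$, centered at points $y$ ranging over $B(Ln)$. Then $\max_{x\in B(Ln)}|u(x)|\le \max_y\bigl(c\Lambda^\omega(B(y,2m))^{\theta}\|u\|_{\underline L^1(B(y,2m))}\bigr)$ with $\theta=p'\tfrac{\delta+1}{\delta}$. On $B(y,2m)$ we split $\|u\|_{\underline L^1}\le \|\chi_j\|_{\underline L^1(B(y,2m))}+\|\Pi_j\|_{\underline L^1(B(y,2m))}\lesssim \|\chi_j\|_{\underline L^1(B(y,2m))}+|y|+m$. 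Dividing by $n$: the $|y|/n$ term is $O(L)$, not small — so this naive splitting still fails, and the genuine point is that \emph{$\chi_j$ itself, not $u$, is what we must feed in}. The resolution used in such arguments is that $\chi_j$ satisfies $\nabla^*(\omega\nabla\chi_j)=\nabla^*(\omega\nabla\Pi_j)=\nabla^*(\omega e_j)$, i.e.\ $\chi_j$ solves a Poisson-type equation; but the remark in the excerpt explicitly says the method does \emph{not} apply to general Poisson equations and instead exploits $f(x)=x\cdot e_j$. So the correct route is: apply Theorem~\ref{T1} to $u=\Pi_j-\chi_j$ on $B(y,m)$, but \emph{subtract an affine function first} — since adding a constant to a harmonic function keeps it harmonic, replace $u$ by $u-u(y)$ (or by $u$ minus its average on $B(y,2m)$), so that the relevant quantity is $\|u-\avg_{B(y,2m)}u\|_{\underline L^1(B(y,2m))}$. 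Now $u-\avg u = (\Pi_j-\avg\Pi_j)-(\chi_j-\avg\chi_j)$, and $\Pi_j-\avg_{B(y,2m)}\Pi_j = x_j - y_j$ has $\underline L^1(B(y,2m))$-norm $\le 2m$, which divided by $n$ tends to $0$ since $m=o(n)$. The $\chi_j$ contribution is $\|\chi_j-\avg_{B(y,2m)}\chi_j\|_{\underline L^1(B(y,2m))}\le 2\|\chi_j\|_{\underline L^1(B(y,2m))}\lesssim_{L,d}\|\chi_j\|_{\underline L^1(B((2L+2)n))}=o(n)$ by Proposition~\ref{P:propprevious}(iii), uniformly in $y\in B(Ln)$.

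It remains to control the prefactor: I need $\max_{y\in B(Ln)}\Lambda^\omega(B(y,2m))^\theta = o(n/(\text{the }o(n)\text{ rates above}))$, or more honestly, since the $L^1$-quantities are $o(n)$ it suffices that $\max_{y\in B(Ln)}\Lambda^\omega(B(y,2m(n)))$ grows slower than any power of $n$, i.e.\ is $n^{o(1)}$ — in fact it suffices that $\tfrac1n\max_y\Lambda^\omega(B(y,2m))^\theta\cdot o(n)\to 0$, so one wants $\max_y\Lambda^\omega(B(y,2m))$ bounded, or at least subpolynomial. By the spatial ergodic theorem, $\|\omega\|^p_{\underline L^p(B(z,r))}\to\mathbb E[\omega(\ee)^p]$ and $\|\omega^{-1}\|^q_{\underline L^q(B(z,r))}\to\mathbb E[\omega(\ee)^{-q}]$ as $r\to\infty$ for $\mathbb P$-a.e.\ $\omega$ and fixed $z$; the standard covering/maximal-function argument (cf.\ the analogous step in \cite{ADS15,Biskup}) upgrades this to $\max_{|z|\le Ln}\|\omega\|^p_{\underline L^p(B(z,r_n))}$ staying bounded along a suitable $r_n\to\infty$, hence $\max_{y\in B(Ln)}\Lambda^\omega(B(y,2m))\le C(\omega,L)<\infty$ for $m=m(n)$ chosen to grow slowly enough (e.g.\ $m=\lfloor\log n\rfloor$). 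With $\Lambda^\omega$-prefactor bounded and the $L^1$ contributions $o(n)$, dividing by $n$ yields the claim. The main obstacle is precisely this last step — making the maximum over the $O(n^d)$ centers $y$ of the ergodic averages $\Lambda^\omega(B(y,2m))$ harmless, which requires choosing the intermediate scale $m(n)$ slowly enough that the number of balls times the failure probability of the ergodic average (or a Borel–Cantelli/maximal-inequality bound) is summable, while still $m(n)\to\infty$ so that $\tfrac{m(n)}{n}\to0$ kills the affine part; I would handle it by the same maximal-ergodic-theorem device already employed for the $L^1$-sublinearity in \cite{ADS15}.
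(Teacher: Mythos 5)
Your core idea is the right one and matches the paper's: since $u=\Pi_j-\chi_j$ has $\underline L^1$-norm of order $n$ (not $o(n)$), you subtract a constant adapted to each small box so that locally the residual affine part is small while the $\chi_j$ part is controlled by $L^1$-sublinearity, and the resulting function is still $\mathcal L^\omega$-harmonic. Indeed the paper sets $u_j^z:=\chi_j(\omega,x)-e_j\cdot(x-\lfloor\tfrac nm\rfloor z)=-\Phi_j(\omega,x)+e_j\cdot\lfloor\tfrac nm\rfloor z$, which is precisely this subtraction-of-a-constant.

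However, your parameterization — a single growing intermediate scale $m=m(n)$ with $m(n)\to\infty$, $m(n)/n\to0$ — introduces a gap you do not close. The inequality you write, $\|\chi_j\|_{\underline L^1(B(y,2m))}\lesssim_{L,d}\|\chi_j\|_{\underline L^1(B((2L+2)n))}$, is false: the normalized $L^1$-norm on a small box is bounded by that on the large box only up to the volume ratio $|B((2L+2)n)|/|B(y,2m)|\sim(n/m)^d$, which blows up when $m=o(n)$. After dividing by $n$ one really has
\begin{equation*}
\frac1n\max_{x\in B(Ln)}|\chi_j(\omega,x)|\ \lesssim\ \max_y\Lambda^\omega(B(y,2m))^{\theta}\Bigl(\Bigl(\tfrac{n}{m}\Bigr)^d\,\frac{\|\chi_j\|_{\underline L^1(B((2L+2)n))}}{n}+\frac{m}{n}\Bigr)+\frac{m}{n},
\end{equation*}
and since Proposition~\ref{P:propprevious}(iii) gives no quantitative rate for the vanishing of $\tfrac1n\|\chi_j\|_{\underline L^1}$, there is no a priori choice of $m(n)$ making $(n/m)^d\cdot o(1)$ vanish; your suggested $m=\lfloor\log n\rfloor$ makes $(n/m)^d$ grow nearly like $n^d$. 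Independently of this, you leave the uniform control of $\max_{y}\Lambda^\omega(B(y,2m(n)))$ over an $n$-dependent net of $\sim(n/m)^d$ centers as a sketch; with both the number of centers and the box radius growing this requires a genuine maximal-ergodic argument, not the plain Birkhoff theorem.

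The paper avoids both problems by a double-limit (not a diagonal) argument with a different indexing of the centers: \emph{fix} $m\in\mathbb N$, cover $B(n)$ by $B(\lfloor\tfrac nm\rfloor z,\lfloor\tfrac nm\rfloor)$ with $z$ ranging over the \emph{fixed finite} set $B(m)$. The volume factor is then the fixed constant $m^d$, absorbed into the implicit constant once $m$ is frozen; and since $z$ runs over finitely many points, $\max_{z\in B(m)}\Lambda^\omega(B(\lfloor\tfrac nm\rfloor z,2\lfloor\tfrac nm\rfloor))$ converges by the plain pointwise ergodic theorem applied along each ray $kz$, $k\to\infty$. Taking $n\to\infty$ first gives $\limsup_n\tfrac1n\|\chi_j\|_{L^\infty(B(n))}\lesssim m^{-1}(1+\text{ergodic averages})$, and only then is $m\to\infty$ sent. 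If you reorganize your argument so that the number of sub-boxes is fixed (indexed by $z\in B(m)$), take $n\to\infty$ with $m$ frozen, and send $m\to\infty$ last, your proof goes through.
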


\begin{remark}\label{rem:sublin}
In \cite{ADS15}, the sublinearity of the corrector in the form \eqref{sublinerlinfty} is shown under moment conditions \eqref{ass:moment} with the more restrictive relation $\frac1p+\frac1q<\frac2d$. In two dimensions \eqref{sublinerlinfty} is proven in \cite{Biskup} under the minimal assumptions $p=q=1$ and thus we focus here on $d\geq3$ (see however Section~\ref{sec:2d} for a discussion of the case $d=2$). We emphasize that Assumption~\ref{ass} is essentially optimal for the conclusion of Proposition~\ref{P:sublinlinfty}. Indeed, it was recently shown by Biskup and Kumagai \cite{BK14} that the statement of Proposition~\ref{P:sublinlinfty} fails if \eqref{ass:moment} only holds for $p,q$ satisfying $\frac1p+\frac1q>\frac{2}{d-1}$ provided $d\geq4$, see \cite[Theorem~2.7]{BK14}. This non-existence of a sublinear corrector implies that the condition $\frac1p+\frac1q<\frac2{d-1}$ in Theorem~\ref{T1} is essentially sharp. Indeed, if estimate \eqref{est:T:boundl1} were valid for some $p,q\in[1,\infty]$, then the proof of Proposition~\ref{P:sublinlinfty} together with Proposition~\ref{P:propprevious} yield \eqref{sublinerlinfty} which contradicts the findings in \cite{BK14} if $\frac1{p}+\frac1{q}>\frac2{d-1}$. 
\end{remark}

\begin{proof}[Proof of Proposition~\ref{P:sublinlinfty}]

Throughout the proof we write $\lesssim$ if $\leq$ holds up to a positive constant which depends only on $d,p$ and $q$. Before we give the details of the proof, we briefly explain the idea. We introduce an additional length-scale $\frac{n}{m}$ with $m\in\mathbb N$ such that $1\ll m\ll n$ and compare $\chi_j$ on boxes with diameter $\sim \frac{n}{m}$ with $\mathcal L^\omega$-harmonic functions $\Phi_j-(e_j\cdot x-c)$ with a suitable chosen $c\in \R$. Using the $L^1$-sublinearity of $\chi_j$ and the fact that the linear part coming from $e_j\cdot x$ can be controlled by $\frac1m$ on each box of radius $\sim \frac{n}{m}$ we obtain the desired claim. 

\step 1 As a preliminarily step, we recall the needed input from ergodic theory. Following \cite{ADS15}, we introduce the following measures $\mu^\omega$ and $\nu^\omega$ on $\mathbb Z^d$:
\begin{equation}\label{def:munu}
 \mu^\omega(x)=\sum_{y\in\mathbb Z^d}\omega(x,y)\quad\mbox{and}\quad\nu^\omega(x)=\sum_{y\in\mathbb Z^d}\frac1{\omega(x,y)}.
\end{equation}
In view of the spatial ergodic theorem, we obtain from the moment condition \eqref{ass:moment} that there exists $\Omega'\subset \Omega$ with $\mathbb P(\Omega')=1$  such that for $\omega\in\Omega'$ and every $z\in\mathbb Z^d$
\begin{equation}\label{eq:ergodic}
 \lim_{n\to\infty}\|\mu^\omega\|_{\underline L^p(B(nz,n))}^p=\mathbb E[\mu^\omega(0)^p]<\infty\quad\mbox{and}\quad \lim_{n\to\infty}\|\nu^\omega\|_{\underline L^q(B(nz,n))}^q=\mathbb E[\nu^\omega(0)^q]<\infty,
\end{equation}
see e.g.\ \cite[Section~6]{Krengel}.

\step 2 We set $\Omega_2:=\Omega_1\cap \Omega'$, where $\Omega'$ is given as in Step~1 and $\Omega_1$ in Proposition~\ref{P:propprevious}. Clearly $\Omega_2$ has full measure. From now on we fix $\omega\in\Omega_2$. 

\smallskip

Fix $m\in\mathbb N$. For $n$ sufficiently large compared to $m$ (the choice $n\geq m(m+1)$ will do), we cover the box $B(n)$ with finitely many boxes $B(\lfloor \frac{n}{m}\rfloor z,\lfloor \frac{n}{m}\rfloor)$, $z\in B(m)$. For $z\in B(m)$, set $u_j^z(\omega,x):=\chi_j(\omega,x)-e_j\cdot(x-\lfloor \frac{n}{m}\rfloor z)=-\Phi_j(\omega,x)+e_j\cdot \lfloor \frac{n}{m}\rfloor z$. Obviously, \eqref{eq:phiharmonic} implies that $u_j^z$ is $\mathcal L^\omega$-harmonic. Hence, \eqref{est:T:boundl1} yields
\begin{align}\label{moser:ui}
\|u_j^z\|_{L^\infty(B(\lfloor \frac{n}{m}\rfloor z,\lfloor \frac{n}{m}\rfloor))}\lesssim& \Lambda^\omega(B(\lfloor \tfrac{n}{m}\rfloor z,2\lfloor \tfrac{n}{m}\rfloor))^{p'(1+\frac1\delta)}\|u_j^z\|_{\underline L^1(B(\lfloor \frac{n}{m}\rfloor z,2\lfloor \frac{n}{m}\rfloor))}\notag\\
\lesssim&\Lambda^\omega( B(\lfloor \tfrac{n}{m}\rfloor z,2\lfloor \tfrac{n}{m}\rfloor))^{p'(1+\frac1\delta)}\left(\|\chi_j\|_{\underline L^1(B(\lfloor \frac{n}{m}\rfloor z,2\lfloor \frac{n}{m}\rfloor))}+\lfloor \tfrac{n}m\rfloor\right),
\end{align}
where $p'=\frac{p}{p-1}$, $\delta=\frac1{d-1}-\frac1{2p}-\frac1{2q}>0$. Estimate \eqref{moser:ui} implies the following $L^\infty$-estimate on $\chi_j$
\begin{eqnarray}\label{est:phiexi:1}
 \|\chi_j\|_{L^\infty(B(n))}&\leq& \sup_{z\in B(m)}\|\chi_j\|_{L^\infty(B(\lfloor \frac{n}{m}\rfloor z,\lfloor \frac{n}{m}\rfloor))}\notag\\
 &\lesssim&  \sup_{z\in B(m)}\|u_j^z\|_{L^\infty(B(\lfloor \frac{n}{m}\rfloor z,\lfloor \frac{n}{m}\rfloor))}+\lfloor \tfrac{n}m\rfloor\notag\\
 &\stackrel{\eqref{moser:ui}}{\lesssim}& \sup_{z\in B(m)}\Lambda^\omega( B(\lfloor \tfrac{n}{m}\rfloor z,2\lfloor \tfrac{n}{m}\rfloor))^{p'(1+\frac1\delta)}\left(\|\chi_j\|_{\underline L^1(B(\lfloor \frac{n}{m}\rfloor z,2\lfloor \frac{n}{m}\rfloor))}+\lfloor \tfrac{n}m\rfloor\right)+\lfloor \tfrac{n}m\rfloor\notag\\
 &\lesssim& (m^{d}\|\chi_j\|_{\underline L^1(B(2n)}+\lfloor \tfrac{n}m\rfloor)\sup_{z\in B(m)}\Lambda^\omega( B(\lfloor \tfrac{n}{m}\rfloor z,2\lfloor \tfrac{n}{m}\rfloor))^{p'(1+\frac1\delta)}+\lfloor \tfrac{n}m\rfloor.
\end{eqnarray}
Since $B(m)$ is a finite set, we obtain from the definition of $\mu$ and $\nu$, see \eqref{def:munu}, and the spatial ergodic theorem in the form \eqref{eq:ergodic} that
\begin{align}\label{limsupLambdae}
&\limsup_{n\to\infty}\max_{z\in B(m)}\Lambda^\omega(B(\lfloor \tfrac{n}{m}\rfloor z,2\lfloor \tfrac{n}{m}\rfloor))\notag\\
\lesssim&\limsup_{n\to\infty}\max_{z\in B(m)}\|\mu^\omega\|_{\underline L^p(B(2\lfloor \tfrac{n}{m}\rfloor z,2\lfloor \tfrac{n}{m}\rfloor)}\|\nu^\omega\|_{\underline L^q(B(2\lfloor \tfrac{n}{m}\rfloor z,2\lfloor \tfrac{n}{m}\rfloor)}\notag\\
\leq& \mathbb E[\mu^{p}(0)]^\frac1p\mathbb E[\nu^q(0)]^\frac1q<\infty.
\end{align}
Finally, we combine \eqref{est:phiexi:1} and \eqref{limsupLambdae} with the $L^1$-sublinearity of $\chi_j$ \eqref{eq:sublinl1} and obtain
\begin{equation*}
 \limsup_{n\to\infty}\frac1n\|\chi_j\|_{L^\infty(B(n))}\lesssim m^{-1}( \mathbb E[\mu^{p}(0)]^\frac1p\mathbb E[\nu^q(0)]^\frac1q)^{p'(1+\frac1\delta)}+m^{-1}.
\end{equation*}
The arbitrariness of $m\in\mathbb N$ implies \eqref{sublinerlinfty} for $L=1$ and the trivial identity
$$
\lim_{n\to\infty}\frac1n\max_{x\in B(Ln)}|\chi_j(\omega,x)|=L\lim_{n\to\infty}\frac1n\max_{x\in B(n)}|\chi_j(\omega,x)|=0
$$
finishes the proof.

\end{proof}

\subsection{Proof of Theorem~\ref{T}}

With help of Proposition~\ref{P:propprevious} and \ref{P:sublinlinfty} we can establish Theorem~\ref{T} following the argument in \cite{ADS15}. First, we observe that Proposition~\ref{P:sublinlinfty} implies
\begin{proposition}\label{P:correctionvanishes}
Let $T>0$. For $\mathbb P$-a.e.\ $\omega$,
\begin{equation*}
\sup_{t\in[0,T]}\frac1n|\chi(\omega,nX_t^{(n)})|\to0\quad\mbox{in ${\bf P}_0^\omega$-probability as $n\to\infty$}.
\end{equation*}
\end{proposition}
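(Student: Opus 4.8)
The plan is the standard one (going back to \cite{ADS15}): combine the everywhere sublinearity of the corrector, Proposition~\ref{P:sublinlinfty}, with the tightness of the rescaled martingale $M^{(n)}$ from Proposition~\ref{P:propprevious}(ii). Since $nX_t^{(n)}=X_{n^2t}$, the claim is that $\sup_{t\in[0,T]}\frac1n|\chi(\omega,X_{n^2t})|\to0$ in ${\bf P}_0^\omega$-probability. I would fix $\omega$ in the intersection of $\Omega_1$ (from Proposition~\ref{P:propprevious}) with the full-measure set on which \eqref{sublinerlinfty} holds for every $L\in\mathbb N$; this intersection still has full $\mathbb P$-measure. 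For $L\in\mathbb N$ write $\eta_n^L:=\frac1n\max_{x\in B(Ln)}|\chi(\omega,x)|$, so that $\eta_n^L\to0$ as $n\to\infty$ for each fixed $L$, and $\eta_n^L$ is nondecreasing in $L$.

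The first step is a deterministic exit-time bound: for all $L,n\in\mathbb N$,
\[
  {\bf P}_0^\omega\Bigl[\sup_{t\in[0,T]}|X_t^{(n)}|>L\Bigr]\ \le\ {\bf P}_0^\omega\Bigl[\sup_{t\in[0,T]}|M_t^{(n)}|>L-\eta_n^{2L}\Bigr].
\]
Indeed, let $\sigma:=\inf\{t\ge0:|X_t^{(n)}|>L\}$. On $\{\sup_{t\le T}|X_t^{(n)}|>L\}=\{\sigma\le T\}$ one has $X_{n^2t}\in B(Ln)$ for $t<\sigma$, and since the walk has unit jumps, $X_{n^2t}\in B(2Ln)$ for all $t\le\sigma$; hence, using $X_t^{(n)}=M_t^{(n)}+\frac1n\chi(\omega,X_{n^2t})$ and evaluating at $t=\sigma\le T$, $L<|X_\sigma^{(n)}|\le\sup_{t\le T}|M_t^{(n)}|+\eta_n^{2L}$, which gives the claimed inclusion of events.

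The second step passes to the limit. On $\{\sup_{t\le T}|X_t^{(n)}|\le L\}$ we have $X_{n^2t}\in B(Ln)$ for all $t\le T$, hence $\sup_{t\le T}\frac1n|\chi(\omega,X_{n^2t})|\le\eta_n^L$; combined with Step~1, for any $\varepsilon>0$,
\[
  {\bf P}_0^\omega\Bigl[\sup_{t\in[0,T]}\tfrac1n|\chi(\omega,nX_t^{(n)})|>\varepsilon\Bigr]\ \le\ {\bf P}_0^\omega\Bigl[\sup_{t\le T}|M_t^{(n)}|>L-\eta_n^{2L}\Bigr]+\mathbf 1_{\{\eta_n^L>\varepsilon\}}.
\]
Fix $L$; since $\eta_n^L\to0$ and $\eta_n^{2L}\to0$, for all large $n$ the indicator vanishes and $L-\eta_n^{2L}\ge L/2$, so $\limsup_{n\to\infty}$ of the left-hand side is at most $\limsup_{n\to\infty}{\bf P}_0^\omega[\sup_{t\le T}|M_t^{(n)}|>L/2]$. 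By Proposition~\ref{P:propprevious}(ii), $\{M^{(n)}\}$ converges in law in $D([0,T],\R^d)$ to a Brownian motion with covariance $\Sigma^2$; the limit has continuous paths, so $\{\sup_{t\in[0,T]}|M_t^{(n)}|\}_n$ is tight in $[0,\infty)$ and the last quantity tends to $0$ as $L\to\infty$. Letting $L\to\infty$ and then $\varepsilon\to0$ concludes the proof.

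I expect the only genuinely delicate point to be the observation underlying Step~1: one cannot directly estimate the exit time of $X^{(n)}$ from a large ball — this is essentially the content of Theorem~\ref{T}, not yet proven at this stage — so the argument must be routed through $M^{(n)}$, whose tightness is already available, using sublinearity to absorb the corrector term up to the exit time. The remaining ingredients (unit jumps of $X$, and tightness of $\sup_{t\le T}|M^{(n)}_t|$ from Skorokhod convergence to a continuous limit) are routine.
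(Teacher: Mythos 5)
Your argument is correct and is essentially the same as the paper's, which simply defers to the proof of \cite[Proposition~2.13]{ADS15}; the exit-time reduction via the martingale $M^{(n)}$, the use of unit jumps to stay in $B(2Ln)$ at the exit time, and the tightness of $\sup_{t\le T}|M^{(n)}_t|$ from Skorokhod convergence to a continuous limit are exactly the ingredients used there, with Proposition~\ref{P:sublinlinfty} supplying the sublinearity input.
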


\begin{proof}[Proof of Proposition~\ref{P:correctionvanishes}]
Appealing to Proposition~\ref{P:sublinlinfty} we can follow verbatim the argument of the proof given \cite[Proposition~2.13]{ADS15}.
\end{proof}

\begin{proof}[Proof of Theorem~\ref{T}]
 A combination of Proposition~\ref{P:propprevious} (part~(ii)) and Proposition~\ref{P:correctionvanishes} yields the desired claim. 
\end{proof}

\section{Local boundedness for $\mathcal L^\omega$-harmonic functions}

\subsection{An auxiliary Lemma}

In this section, we provide a key estimate, formulated in Lemma~\ref{L:optimcutoff} below, that is central in our proof of
Theorem~\ref{T1}. Before we come to this lemma, we recall suitable versions of the Sobolev inequality
\begin{theorem}
 Fix $d\geq2$. For every $s\in[1,d)$ set $s_d^*:=\frac{ds}{d-s}$.
 \begin{enumerate}[(i)]
  \item For every $s\in[1,d)$ there exists $c=c(d,s)\in[1,\infty)$ such that for every $f:\mathbb Z^d\to\R$ it holds
 \begin{equation}\label{est:sobolev:bulk}
 \|f-(f)_{B(n)}\|_{L^{s_d^*}(B(n))}\leq c\|\nabla f\|_{L^s(B(n))},
 \end{equation}
 where $(f)_{B(n)}:=\frac1{|B(n)|}\sum_{x\in B(n)}f(x)$.
 \item For every $s\in[1,d-1)$ there exists $c=c(d,s)\in[1,\infty)$ such that for every $f:\mathbb Z^d\to\R$ it holds
 \begin{equation}\label{est:sobolev:sphere}
 \|f\|_{L^{s_{d-1}^*}(\partial B(n))}\leq c(\|\nabla f\|_{L^s(\partial B(n))}+n^{-1}\| f\|_{L^s(\partial B(n))}).
 \end{equation}

 \end{enumerate}
 \end{theorem}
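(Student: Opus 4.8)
The statement to prove is the pair of Sobolev inequalities on the discrete box $B(n)$ and on its discrete boundary $\partial B(n)$. Let me think about how to prove these.

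The plan is to derive both inequalities from the standard continuum Sobolev/Gagliardo–Nirenberg inequalities by a transfer argument, or alternatively to prove them directly by the Gagliardo–Nirenberg combinatorial method adapted to the lattice.

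For part (i): This is the classical discrete Sobolev–Poincaré inequality on a cube. One standard approach: extend $f$ from $B(n)$ to a function on $\mathbb{R}^d$ (e.g., by taking the function constant on each unit cube centered at lattice points, a "piecewise constant" extension, or a piecewise-linear extension on a triangulation of the cube), apply the continuum Sobolev–Poincaré inequality $\|g - \bar g\|_{L^{s_d^*}(Q)} \lesssim \|\nabla g\|_{L^s(Q)}$ on the cube $Q = [-n,n]^d$, and then compare the continuum norms with their discrete counterparts. For a piecewise-linear extension, $\|\nabla g\|_{L^s(Q)}^s \lesssim \sum_{\ee} |\nabla f(\ee)|^s$ and $\|g\|_{L^{s_d^*}(Q)}^{s_d^*} \sim \sum_x |f(x)|^{s_d^*}$, with constants depending only on $d,s$. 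The only subtle point is handling the difference between $\bar g$ (continuum average) and $(f)_{B(n)}$ (discrete average), which is resolved because both Poincaré inequalities hold with *any* constant subtracted — one just needs $\|f - (f)_{B(n)}\|_{L^{s_d^*}} \le 2\|f - c\|_{L^{s_d^*}}$ applied with $c$ the best constant, and then the triangle inequality.

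For part (ii): This is a Sobolev inequality on the discrete $(d-1)$-dimensional "sphere" $\partial B(n)$, which is topologically a union of $2d$ faces of side $\sim 2n$ glued along edges — essentially a $(d-1)$-dimensional manifold. The cleanest route is again via the continuum: view $\partial B(n)$ as (a lattice approximation of) the boundary of the cube, which is a closed Lipschitz $(d-1)$-manifold, and invoke the Sobolev inequality $\|g\|_{L^{s_{d-1}^*}(\partial Q)} \lesssim \|\nabla_{\partial Q} g\|_{L^s(\partial Q)} + R^{-1}\|g\|_{L^s(\partial Q)}$ on $\partial Q$ where $R \sim n$ (the scaling factor $n^{-1}$ is exactly what makes the inequality scale-consistent and is needed because $L^{s_{d-1}^*}$ and $L^s$ have different homogeneities). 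Then transfer as before: extend the lattice function on $\partial B(n)$ to a piecewise-linear function on $\partial Q$, compare norms, and handle the edges/corners where faces meet (these contribute only lower-dimensional sets and do not affect the estimate). The term $n^{-1}\|f\|_{L^s(\partial B(n))}$ arises because on a compact manifold of diameter $\sim n$ one cannot get pure Sobolev without a zeroth-order term.

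The main obstacle I expect is the bookkeeping in part (ii): carefully constructing the extension across the edges where adjacent faces of $\partial B(n)$ meet, and verifying that the discrete gradient $\nabla f(\ee)$ over bonds $\ee \in (\partial B(n))_{\mathbb{B}^d}$ — including the "bent" bonds that turn a corner — controls the tangential continuum gradient on $\partial Q$. An alternative that avoids extension entirely would be to prove (ii) directly by the Gagliardo–Nirenberg slicing argument on each of the $2d$ faces and then patch, but the patching still requires controlling $f$ near the shared edges, which is where the $n^{-1}\|f\|_{L^s}$ term and the "best constant" trick come back in. Either way, part (i) is routine and the content is in making the boundary geometry of part (ii) precise.

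\medskip

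\textbf{Proof sketch.} The plan is to transfer both inequalities from their well-known continuum counterparts.

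\emph{Part (i).} Given $f:\mathbb{Z}^d\to\mathbb{R}$, triangulate the cube $Q:=[-n,n]^d$ so that all vertices of $B(n)$ are nodes, and let $g$ be the continuous, piecewise-affine extension of $f|_{B(n)}$. A direct comparison on each simplex gives, with constants depending only on $d$,
\begin{equation*}
  \|g\|_{L^{r}(Q)}^{r}\;\sim\;\sum_{x\in B(n)}|f(x)|^{r}\quad(r\ge 1),\qquad
  \|\nabla g\|_{L^{s}(Q)}^{s}\;\lesssim\;\sum_{\ee\in B(n)_{\mathbb{B}^d}}|\nabla f(\ee)|^{s}.
\end{equation*}
Apply the continuum Sobolev--Poincar\'e inequality $\|g-c\|_{L^{s_d^*}(Q)}\lesssim\|\nabla g\|_{L^{s}(Q)}$ with $c$ the continuum mean of $g$, and then use that for the exponent $s_d^*$ and \emph{any} constant $c'$ one has $\|f-(f)_{B(n)}\|_{L^{s_d^*}(B(n))}\le 2\|f-c'\|_{L^{s_d^*}(B(n))}$ (minimality of the mean up to a factor $2$); choosing $c'$ adapted to $c$ and undoing the norm comparisons yields \eqref{est:sobolev:bulk}.

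\emph{Part (ii).} Regard $\partial B(n)$ as a lattice discretization of $\partial Q$, a closed Lipschitz $(d-1)$-manifold of diameter $\sim n$. Extend $f|_{\partial B(n)}$ to a continuous, piecewise-affine function $g$ on a triangulation of $\partial Q$ whose nodes are the points of $\partial B(n)$ (triangulating each of the $2d$ faces and, if necessary, a thin collar near the edges where faces meet); as in (i),
\begin{equation*}
  \|g\|_{L^{r}(\partial Q)}^{r}\;\sim\;n\!\!\sum_{x\in\partial B(n)}\!\!|f(x)|^{r},\qquad
  \|\nabla_{\partial Q} g\|_{L^{s}(\partial Q)}^{s}\;\lesssim\;n\!\!\!\!\sum_{\ee\in(\partial B(n))_{\mathbb{B}^d}}\!\!\!\!|\nabla f(\ee)|^{s},
\end{equation*}
where the bent bonds turning an edge of $\partial Q$ still control the tangential gradient up to a dimensional constant. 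Now invoke the scale-invariant Sobolev inequality on $\partial Q$,
\begin{equation*}
  \|g\|_{L^{s_{d-1}^*}(\partial Q)}\;\lesssim\;\|\nabla_{\partial Q} g\|_{L^{s}(\partial Q)}+n^{-1}\|g\|_{L^{s}(\partial Q)},
\end{equation*}
which holds with constant depending only on $d,s$ after rescaling $\partial Q$ to unit size (the $n^{-1}$ weight is forced by the mismatched homogeneities of $L^{s_{d-1}^*}$ and $L^{s}$ on a compact manifold). Translating back via the norm comparisons above, the factors of $n$ match on both sides and \eqref{est:sobolev:sphere} follows.

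The only delicate point is the construction of the extension $g$ near the edges and corners of $\partial Q$ in part (ii), and the verification that discrete bonds crossing an edge control the tangential continuum gradient; since these edge sets are of codimension one inside $\partial Q$ and carry a controlled number of simplices, they affect the estimate only through dimensional constants. Part (i) is entirely standard.
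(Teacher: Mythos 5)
Your strategy is sound but takes a genuinely different route from the paper's, most notably in part (ii). For part (i) the paper stays entirely discrete: it quotes the case $s\in(1,d)$ from the literature and, for $s=1$, combines a whole-space discrete Gagliardo--Nirenberg inequality and an $L^1$-Poincar\'e inequality on the box (both from \cite{BR18}) with a discrete reflection-extension and a cut-off function; your continuum piecewise-affine transfer is a legitimate alternative, and your handling of the mismatch between continuum and discrete averages via the ``best constant up to a factor $2$'' observation is the right fix. For part (ii) the paper's argument is much lighter than yours: it never treats $\partial B(n)$ as a glued closed manifold. It applies part (i) \emph{in dimension $d-1$} to each of the $2d$ facets $F$ separately, bounds the subtracted facet average by $|F|^{1/s_{d-1}^*-1/s}\|f\|_{L^s(F)}=|F|^{-1/(d-1)}\|f\|_{L^s(F)}\sim n^{-1}\|f\|_{L^s(F)}$ via H\"older, and sums over facets; since each facet's norms are dominated by the corresponding norms on all of $\partial B(n)$, no compatibility across edges or corners is ever needed. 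The zeroth-order term $n^{-1}\|f\|_{L^s}$, which you correctly identify as unavoidable, is exactly what absorbs the per-facet means and renders the gluing you labor over unnecessary. So the manifold Sobolev inequality on $\partial Q$ and the extension across edges are extra work your route incurs for no gain.

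One quantitative slip in your part (ii): for a unit-scale triangulation of $\partial Q$ the correct comparisons are $\|g\|_{L^r(\partial Q)}^r\sim\sum_{x\in\partial B(n)}|f(x)|^r$ and $\|\nabla_{\partial Q}g\|_{L^s(\partial Q)}^s\lesssim\sum_{\ee}|\nabla f(\ee)|^s$ \emph{without} the factor $n$ you inserted (the surface measure of $\partial Q$ is $\sim n^{d-1}$, matching the $\sim n^{d-1}$ points of $\partial B(n)$). As written, your claim that ``the factors of $n$ match on both sides'' is false: a common factor $C_n$ enters the two sides of the Sobolev inequality with the different powers $1/s_{d-1}^*$ and $1/s$, so $C_n=n$ would leave a residual $n^{1/s-1/s_{d-1}^*}\to\infty$ and destroy the estimate. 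With the correct factor $C_n\sim 1$ the transfer is clean, so the error is harmless once fixed, but the cancellation you invoke is not the mechanism that makes the argument work.
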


\begin{proof}
The above statements are standard. Since we did not find a textbook reference for the discrete situation considered here we provide the argument for some parts of the statement. In what follows we write $\lesssim$ if $\leq$ holds up to a positive constant that depends only on the dimension $d$. 

\step 1 Proof of part~(i). For $s\in(1,d)$ the proof of the claim can be found in \cite[Theorem~2.6]{MO16}. It is left to consider the case $s=1$. In \cite{BR18} it is proven that
\begin{align}
\forall f:\mathbb Z^d \to\R\mbox{ with finite support}\quad \biggl(\sum_{x\in\mathbb Z^d}|f(x)|^\frac{d}{d-1}\biggr)^\frac{d-1}d\lesssim &\sum_{\ee\in\mathbb B^d}|\nabla f(\ee)|,\label{sobolev:1:a}\\
\|f-(f)_{B(n)}\|_{L^{1}(B(n))}\lesssim& |B(n)|^\frac1d \|\nabla f\|_{L^1(B(n))}\label{sobolev:1:b},
\end{align}
see \cite[Lemma~2.1 and 2.2]{BR18}. We deduce \eqref{est:sobolev:bulk} (with $s=1$) from \eqref{sobolev:1:a} and \eqref{sobolev:1:b} by a simple extension argument. Indeed, functions defined on a box $B(n)$ can easily extended by successive reflections (see e.g.\ \cite[Section~9.2]{Brezis}). In particular, there exists $k=k(d)\in\mathbb N\setminus\{1,2\}$ such that for every $g:B(n)\to \R$ we find $\bar g:B(kn)\to\R$ such that
\begin{equation}\label{est:extension}
\bar g(x)=g(x)\quad \mbox{in $B(n)$},\quad \|\bar g\|_{L^1(B(k n))}\lesssim \| g\|_{L^1(B( n))},\quad \|\nabla \bar g\|_{L^1(B(kn))}\lesssim \|\nabla g\|_{L^1(B(n))}.
\end{equation}
Choose $g:=f-(f)_{B(n)}$ and consider a cut-off function 
\begin{equation}\label{eta}
\eta:\mathbb Z^d\to[0,1],\quad \eta=1\quad \mbox{in $ B(n)$}\quad\eta=0\quad\mbox{in $\mathbb Z^d\setminus B(3n-1)$,}\quad |\nabla \eta(\ee)|\lesssim n^{-1}\quad \mbox{for all $\ee\in\mathbb B^d$}.
\end{equation}
Then,
\begin{eqnarray*}
 \|f-(f)_{B(n)}\|_{L^{\frac{d}{d-1}}(B(n))}&\stackrel{\eqref{est:extension}\eqref{eta}}{\leq}& \|\eta\bar g\|_{L^{\frac{d}{d-1}}(B(kn))}\\
 &\stackrel{\eqref{sobolev:1:a}}{\lesssim}&  \|\nabla (\eta\bar g)\|_{L^1(B(kn))}\\
 &\stackrel{\eqref{chainrule}\eqref{eta}}{\lesssim}& \|\nabla \bar g\|_{L^1(B(kn))}+n^{-1}\|\bar g\|_{L^1(B(kn))}\\
 &\stackrel{\eqref{est:extension}}{\lesssim}& \|\nabla f\|_{L^1(B(n))}+n^{-1}\|f-(f)_{B(n)}\|_{L^1(B(n))}\\
 &\stackrel{\eqref{sobolev:1:b}}{\lesssim}& \|\nabla f\|_{L^1(B(n))},
\end{eqnarray*}
where in the last estimate we used $|B(n)|^\frac1d\lesssim n$.

\step 2 Proof of part~(ii). Consider a facet $F$ of $\partial B(n)$ given by $\{x\in B(n)\,|\,x\cdot e_j=t n\}$ for some $j\in\{1,\dots,d\}$ and $t\in\{-1,1\}$. Then, appealing to part~(i), we find $c=c(d,s)\in[1,\infty)$ such that
 \begin{align}\label{est:proof:sobolev:sphere}
 \|f\|_{L^{s_{d-1}^*}(F)}\leq& \|f-(f)_{F}\|_{L^{s_{d-1}^*}(F)}+\|(f)_{F}\|_{L^{s_{d-1}^*}(F)}\notag\\
 \leq&c\|\nabla f\|_{L^s(F)}+|F|^{\frac1s-\frac1{d-1}}|(f)_{F}|\notag\\
 \leq&c\|\nabla f\|_{L^s(F)}+|F|^{-\frac1{d-1}}\|f\|_{L^s(F)}.
 \end{align}
Summing \eqref{est:proof:sobolev:sphere} over all facets $F$ and using $|F|=(2n-1)^{d-1}$, we obtain \eqref{est:sobolev:sphere}. 
\end{proof}

\begin{lemma}\label{L:optimcutoff}
Fix $d\geq3$, $\omega\in\Omega$, $\rho,\sigma \in \mathbb N$ with $\rho<\sigma$ and $v:\mathbb Z^d \to\mathbb R$.  Consider
\begin{equation*}
 J(\rho,\sigma,v):=\inf\biggl\{\sum_{\ee \in \mathbb B^d}\omega(\ee) (|v|(\ee))^2(\nabla \eta(\ee))^2 \;|\;\eta:\mathbb Z^d\to[0,\infty),\, \mbox{$\eta=1$ in $B(\rho)$ and $\eta=0$ in $\mathbb Z^d\setminus B(\sigma-1)$}\biggr\},
\end{equation*}
where $|v|(\ee)=\frac12 (|v(\underline \ee)|+|v(\overline \ee)|)$. Then there exists $c=c(d,p)\in[1,\infty)$ such that
\begin{equation}\label{est:cutoff}
J(\rho,\sigma,v)\leq c(\sigma-\rho)^{-\frac{2d}{d-1}}\|\omega\|_{L^p(B(\sigma)\setminus B(\rho-1))}\left(\|\nabla v\|_{L^{p_*}(B(\sigma)\setminus B(\rho-1))}^2+\rho^{-2}\|v\|_{L^{p_*}(B(\sigma)\setminus B(\rho-1))}^2\right),
\end{equation}
where $p_*$ is given by $\frac{1}{p_*}=\frac12-\frac1{2p}+\frac{1}{d-1}$. 
\end{lemma}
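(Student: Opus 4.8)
The plan is to make an explicit choice of cutoff $\eta$ that is radial, decaying from $1$ on $\partial B(\rho)$ to $0$ on $\partial B(\sigma-1)$, and then estimate the resulting sum by combining Hölder's inequality (to separate $\omega$ from $|v|^2$) with the trace-type Sobolev inequality \eqref{est:sobolev:sphere} on the spheres $\partial B(r)$. The key point is that a naive linear cutoff, $\eta(x) \approx \frac{\sigma - |x|_\infty}{\sigma - \rho}$, gives $|\nabla\eta| \sim (\sigma-\rho)^{-1}$ uniformly on the annulus $A := B(\sigma)\setminus B(\rho-1)$, which only yields the factor $(\sigma-\rho)^{-2}$ rather than the sharp $(\sigma-\rho)^{-\frac{2d}{d-1}}$. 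To gain the extra decay one should instead optimize the \emph{radial profile} of $\eta$: write $\eta(x) = g(|x|_\infty)$ for a nonincreasing $g$ with $g(\rho)=1$, $g(\sigma-1)=0$, so that $|\nabla\eta(\ee)|^2 \lesssim (g(r+1)-g(r))^2$ for $\ee$ at radius $r$, and choose the increments $g(r+1)-g(r) =: -a_r \ge 0$ to minimize $\sum_{r=\rho}^{\sigma-1} a_r^2 \, w_r$ subject to $\sum_r a_r = 1$, where $w_r := \sum_{\ee \text{ at radius } r} \omega(\ee)(|v|(\ee))^2$. By Lagrange multipliers (Cauchy–Schwarz with weights) the optimal choice is $a_r \propto w_r^{-1}$, giving $J(\rho,\sigma,v) \lesssim \bigl(\sum_{r=\rho}^{\sigma-1} w_r^{-1}\bigr)^{-1}$, i.e. the harmonic mean of the $w_r$ controls everything. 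This is the crucial structural step.

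Next I would estimate each $w_r$ from below is the wrong direction — instead I apply the power-mean/harmonic-mean inequality: since $\bigl(\frac{1}{N}\sum_r w_r^{-1}\bigr)^{-1} \le \frac{1}{N}\sum_r w_r^{q'}$-type bounds are not what we want, the clean route is to note $\bigl(\sum_{r=\rho}^{\sigma-1} w_r^{-1}\bigr)^{-1} \le \frac{1}{\sigma-\rho}\cdot\frac{1}{\sigma-\rho}\sum_r w_r$ is false in general; rather, by Jensen with the convex function $t\mapsto t^{-1}$ applied to the uniform average over the $\sigma-\rho$ radii, $\bigl(\frac{1}{\sigma-\rho}\sum_r w_r^{-1}\bigr)^{-1} \le \frac{1}{\sigma-\rho}\sum_r w_r$, hence $\bigl(\sum_r w_r^{-1}\bigr)^{-1} \le (\sigma-\rho)^{-2}\sum_r w_r$, which only recovers $(\sigma-\rho)^{-2}$ again. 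So the extra powers must come from a \emph{nonuniform} interpolation: apply the reverse direction using Hölder with exponent matched to the spatial dimension of the spheres. Concretely, bound $w_r \lesssim \|\omega\|_{L^p(\partial B(r))} \, \|v\|_{L^{2p'}(\partial B(r))}^2$ by Hölder on the $(d-1)$-dimensional sphere $\partial B(r)$, then use \eqref{est:sobolev:sphere} on $\partial B(r)$ with $s=p_*$ chosen exactly so that $s_{d-1}^* = 2p'$ — indeed $\frac{1}{p_*}=\frac12-\frac1{2p}+\frac1{d-1}$ is precisely the relation making $\frac{(d-1)p_*}{d-1-p_*} = 2p' = \frac{2p}{p-1}$ — to get $\|v\|_{L^{2p'}(\partial B(r))} \lesssim \|\nabla v\|_{L^{p_*}(\partial B(r))} + r^{-1}\|v\|_{L^{p_*}(\partial B(r))}$. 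Plugging the optimal $a_r \propto w_r^{-1}$ back in and summing over $r\in\{\rho,\dots,\sigma-1\}$ via Hölder in the $r$-variable (with exponents $(p, p')$ in the form $\sum_r b_r \le \|b\|_{\ell^{?}}(\sigma-\rho)^{1/?}$ and the $(d-1)$-dimensional volume factor $|\partial B(r)| \sim r^{d-1} \gtrsim \rho^{d-1}$ on the annulus, which produces the $(\sigma-\rho)^{-\frac{2d}{d-1}}$ scaling through the interplay of the $\sigma-\rho$ radial summands each carrying a sphere of volume $\gtrsim \rho^{d-1}$ versus the normalization $\|\cdot\|_{L^p(A)}$ over $|A| \sim (\sigma-\rho)\rho^{d-1}$) yields \eqref{est:cutoff}.

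I expect the main obstacle to be bookkeeping the exponents in this last aggregation step: one must track the normalization constants carefully to see that the sphere-by-sphere Hölder and Sobolev estimates, once summed against the optimal weights $a_r \propto w_r^{-1}$ and expressed in terms of the \emph{un-normalized} $L^p$ and $L^{p_*}$ norms over the full annulus $B(\sigma)\setminus B(\rho-1)$, combine to give exactly the power $(\sigma-\rho)^{-\frac{2d}{d-1}}$ and not merely $(\sigma-\rho)^{-2}$. The gain of $\frac{2d}{d-1} - 2 = \frac{2}{d-1}$ extra powers is the whole point of the lemma and is exactly what the optimization over radial profiles buys us; it is the discrete analogue of the sharp cutoff estimate in the authors' continuum paper \cite{BS19a}, and I would model the computation on that. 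A secondary technical nuisance is handling the discrete geometry near the corners of the cubes $B(r)$ (where $|\nabla\eta|$ could a priori be larger), but this is absorbed into the dimensional constant $c(d,p)$ since the number of corner bonds is lower order and the product rule \eqref{chainrule} controls $\nabla\eta$ pointwise by the radial increment up to a factor $\sqrt{d}$.
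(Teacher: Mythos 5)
Your proposal correctly identifies the paper's starting move: restrict to radial cutoffs $\eta(x)=g(|x|_\infty)$, minimize over the increments $a_r$ subject to $\sum_r a_r=1$, and conclude (via Cauchy–Schwarz/Lagrange) that the optimal profile has $a_r\propto w_r^{-1}$, giving
\begin{equation*}
J(\rho,\sigma,v)\lesssim\Bigl(\sum_{r=\rho}^{\sigma-1}w_r^{-1}\Bigr)^{-1},\qquad w_r:=\sum_{\ee\in S(r)}\omega(\ee)(|v|(\ee))^2.
\end{equation*}
You also correctly identify the role of the Sobolev inequality \eqref{est:sobolev:sphere} on each sphere with the exponent $s=p_*$ chosen so that $s_{d-1}^*=2p'$. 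These match the paper's Steps~1 and~2.

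The gap is in the step that converts the harmonic mean $(\sum_r w_r^{-1})^{-1}$ into the claimed $(\sigma-\rho)^{-\frac{2d}{d-1}}$. You rightly observe that naive Jensen only yields $(\sigma-\rho)^{-2}\sum_r w_r$, but you then misattribute the extra decay to ``volume factors'' and the ``interplay'' between $|\partial B(r)|\sim\rho^{d-1}$ and $|A|\sim(\sigma-\rho)\rho^{d-1}$. That cannot be right: the norms on the right-hand side of \eqref{est:cutoff} are \emph{un-normalized}, so no volume factors can appear, and indeed none do in the paper's argument. The actual mechanism is an intermediate H\"older step applied directly to the harmonic sum: writing $\sigma-\rho=\sum_{r}(w_r/w_r)^{1/s'}\le(\sum_r w_r^{s/s'})^{1/s}(\sum_r w_r^{-1})^{1/s'}$ one gets, with $\gamma:=s-1$,
\begin{equation*}
\Bigl(\sum_r w_r^{-1}\Bigr)^{-1}\le(\sigma-\rho)^{-(1+\frac1\gamma)}\Bigl(\sum_r w_r^\gamma\Bigr)^{1/\gamma},
\end{equation*}
and the specific choice $\gamma=\frac{d-1}{d+1}$ both produces $1+\frac1\gamma=\frac{2d}{d-1}$ and is exactly the value making $\frac{\gamma}{p}+\frac{2\gamma}{p_*}=1$, so that the final H\"older in $r$ (after the sphere-by-sphere H\"older and Sobolev applied to $w_r^\gamma$) reassembles into unnormalized $\|\omega\|_{L^p(A)}$ and $\|\nabla v\|_{L^{p_*}(A)}^2$ over the full annulus $A$. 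Your proposal invokes a ``H\"older in the $r$-variable with exponents $(p,p')$,'' which are not the correct exponents, and never pins down $\gamma$. Without this the $(\sigma-\rho)^{-\frac{2d}{d-1}}$ decay is not established; this is the essential step the lemma is about and as written your argument does not reach it.
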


\begin{proof}[Proof of Lemma~\ref{L:optimcutoff}]
\step 1 We claim 
\begin{equation}\label{1dmin}
 J(\rho,\sigma,v)\leq  (\sigma-\rho)^{-(1+\frac1\gamma)} \biggl(\sum_{k=\rho}^{\sigma-1} \biggl(\sum_{\ee\in S(k)} \omega(\ee) |v|(\ee)^2\biggr)^\gamma\biggr)^\frac1\gamma\qquad\mbox{for every $\gamma>0$},
\end{equation}
where for every $m\in\mathbb N$
\begin{equation*}
S(m):=\{\ee\in\mathbb B^d\,|\,\underline \ee\in \partial B(m),\, \overline \ee \in \partial B(m+1)\}.
\end{equation*}
Restricting the class of admissible cut-off functions to those of the form $\eta(x)=\hat \eta(\max_{i=1,\dots,d}\{|x\cdot e_i|\})$, we obtain
\begin{equation}\label{1dmin1}
 J(\rho,\sigma,v)\leq \inf\biggl\{\sum_{k=\rho}^{\sigma-1} \hat \eta'(k)^2\biggl(\sum_{\ee\in S(k)} \omega(\ee) (|v|(\ee))^2\biggr) \;|\;\hat\eta:\mathbb N\to[0,\infty),\,\hat\eta(\rho)=1,\,\hat\eta(\sigma)=0\biggr\}=:J_{\rm 1d},
\end{equation}
where $\hat\eta'(k):=\hat\eta(k+1)-\hat\eta(k)$. The minimization problem \eqref{1dmin1} can be solved explicitly. Indeed, set $f(k):=\sum_{\ee\in S(k)} \omega(\ee) (|v|(\ee))^2$ for every $k\in\mathbb Z$ and suppose $f(k)>0$ for every $k\in\{\rho,\dots,\sigma-1\}$. Then, $\hat\eta:\mathbb N\to[0,\infty)$ with $\hat\eta(i)=1-\left(\sum_{k=\rho}^{\sigma-1}f(k)^{-1}\right)^{-1}\sum_{k=\rho}^{i-1}f(k)^{-1}$ is a valid competitor in the minimization problem for $J_{\rm 1d}$ and we obtain
\begin{equation*}
 J(\rho,\sigma,v)\leq \biggl(\sum_{k=\rho}^{\sigma-1}\biggl(\sum_{\ee\in S(k)} \omega(\ee) (|v|(\ee))^2\biggr)^{-1}\biggr)^{-1}.
\end{equation*}
By H\"older inequality, we obtain for any $s>1$ that $\sigma-\rho=\sum_{k=\rho}^{\sigma-1} (\frac{f}{f})^{\frac1{s'}}\leq\left(\sum_{k=\rho}^{\sigma-1} f^\frac{s}{s'}\right)^\frac1s\left(\sum_{k=\rho}^{\sigma-1} \frac1{f}\right)^\frac1{s'}$ with $s'=\frac{s}{s-1}$ and thus
\begin{equation*}
J_{\rm 1d}\leq (\sigma-\rho)^{-s'}\biggl(\sum_{k=\rho}^{\sigma-1} \biggl(\sum_{\ee\in S(k)} \omega(\ee) (|v|(\ee))^2\biggr)^\frac{s}{s'}\biggr)^{\frac{s'}s}.
\end{equation*}
The claim \eqref{1dmin} follows with $\gamma=s-1>0$. Finally, if $f(k)=\sum_{\ee\in S(k)} \omega(\ee) (|v|(\ee))^2=0$ for some $k\in\{\rho,\dots,\sigma-1\}$, we easily obtain $J_{\rm 1d}=0$ and \eqref{1dmin} is trivially satisfied.

\step 2 We estimate the right-hand side of \eqref{1dmin} with the help of the H\"older inequality and Sobolev inequality in the form \eqref{est:sobolev:sphere}. More precisely, there exists $c=c(p,d)\in[1,\infty)$ (changing from line to line) such that
\begin{eqnarray*}
J(\rho,\sigma,v)&\leq&  \frac{c}{(\sigma-\rho)^{1+\frac1\gamma}} \biggl(\sum_{k=\rho}^{\sigma-1}  \biggl(\sum_{\ee \in S(k)} \omega(\ee)^p\biggr)^{\frac\gamma{p}}\biggl(\sum_{x\in \partial B(k)} |v(x)|^{\frac{2p}{p-1}}+\sum_{x\in \partial B(k+1)} |v(x)|^{\frac{2p}{p-1}}\biggr)^{\frac{(p-1)\gamma}{p}}\biggr)^\frac1\gamma\notag\\
&\leq&  \frac{2^{1-\frac1p}c}{(\sigma-\rho)^{1+\frac1\gamma}} \biggl(\sum_{k=\rho}^{\sigma-1}  \biggl(\sum_{\ee \in S(k)} \omega(\ee)^p\biggr)^{\frac\gamma{p}}\biggl(\|v\|_{L^{\frac{2p}{p-1}}(\partial B(k))}^{2\gamma}+\|v\|_{L^{\frac{2p}{p-1}}(\partial B(k+1))}^{2\gamma}\biggr)\biggr)^\frac1\gamma\notag\\
&\stackrel{\eqref{est:sobolev:sphere}}{\leq}& \frac{c}{(\sigma-\rho)^{1+\frac1\gamma}} \biggl(\sum_{n=\rho}^{\sigma-1}  \biggl(\sum_{\ee\in S(k)} \omega(\ee)^p\biggr)^{\frac\gamma{p}}\sum_{i=k}^{k+1}\biggl(\|\nabla v\|_{L^{p_*}(\partial B(i))}^{2\gamma}+i^{-2\gamma}\|v\|_{L^{p_*}(\partial B(i))}^{2\gamma}\biggr)\biggr)^\frac1\gamma,
\end{eqnarray*}
where $\frac12-\frac1{2p}=\frac1{p_*}-\frac1{d-1}$ (note that $(p_*)_{d-1}^*=\frac{2p}{p-1}$).  The choice $\gamma=\frac{d-1}{d+1}$ yields $\frac{\gamma}{p}+\frac{2\gamma}{{p_*}}=1$ and thus by H\"olders inequality we obtain \eqref{est:cutoff} for some $c=c(d,p)\in[1,\infty)$.
\end{proof}

\subsection{Proof of Theorem~\ref{T1}}

We first present a weaker version of Theorem~\ref{T1} in which the right-hand side of the estimate \eqref{est:T:boundl1} is replaced by a slightly larger term. 
\begin{theorem}\label{T:bound}
Fix $d\geq3$, $\omega\in\Omega$ and let $p,q\in(1,\infty]$ be such that $\frac1p+\frac1q<\frac2{d-1}$. Then there exists $c=c(d,p,q)\in[1,\infty)$ such that solutions of $\nabla^*({\omega}\nabla u)=0$ in $\mathbb Z^d$ satisfy for every $n\in \mathbb N$
\begin{equation*}
 \max_{x\in B(n)}|u(x)|\leq c\Lambda^\omega(B(4n))^{\frac{\delta+1}{2\delta}}\|u\|_{\underline L^{2p'}(B(4n))},
\end{equation*}
where $\delta=\frac{1}{d-1}-\frac1{2p}-\frac1{2q}>0$, $p'=\frac{p}{p-1}$ and  $\Lambda^\omega$ is defined in \eqref{def:lambda}.
\end{theorem}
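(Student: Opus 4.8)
The plan is to run a Moser-type iteration adapted to the degenerate setting, using the cutoff estimate of Lemma~\ref{L:optimcutoff} as the engine that converts the Caccioppoli inequality into a reverse Hölder chain on a dyadic sequence of radii. Fix $u$ with $\nabla^*(\omega\nabla u)=0$ in $\mathbb Z^d$. For an exponent $\alpha\ge 1$ and a cutoff $\eta$ supported in $B(\sigma-1)$ with $\eta\equiv 1$ on $B(\rho)$, I would test the equation with $\eta^2 |u|^{\alpha-1}u$ (or, since we are in a discrete setting, a suitable discrete analogue, handling the product rule \eqref{chainrule} and the arithmetic-mean identification carefully). This yields the standard Caccioppoli-type bound
\begin{equation*}
\sum_{\ee\in B(\sigma)}\omega(\ee)(\nabla w(\ee))^2 \eta^2 \lesssim \alpha^2 \sum_{\ee\in B(\sigma)}\omega(\ee)(|w|(\ee))^2(\nabla\eta(\ee))^2,
\end{equation*}
where $w:=|u|^{(\alpha+1)/2}$ (up to powers of $\alpha$ that one must track). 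The crucial point is that instead of the naive choice $|\nabla\eta|\lesssim(\sigma-\rho)^{-1}$, I take the infimum over admissible $\eta$, i.e.\ I replace the right-hand side by $\alpha^2 J(\rho,\sigma,w)$ and invoke \eqref{est:cutoff} to bound $J(\rho,\sigma,w)\lesssim(\sigma-\rho)^{-2d/(d-1)}\|\omega\|_{L^p}\big(\|\nabla w\|_{L^{p_*}}^2+\rho^{-2}\|w\|_{L^{p_*}}^2\big)$ on the annulus $B(\sigma)\setminus B(\rho-1)$. This is what buys the improved exponent relation $\frac1p+\frac1q<\frac2{d-1}$ rather than $\frac2d$.

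Next I would combine this with the bulk Sobolev inequality \eqref{est:sobolev:bulk}: on the left, $\sum_{\ee}\omega(\ee)(\nabla w)^2\eta^2 \gtrsim \omega_{\min}$-type control is \emph{not} available, so instead I use $\|\omega^{-1}\|_{\underline L^q}$ together with Hölder to pass from the weighted Dirichlet energy of $w\eta$ to $\|\nabla(w\eta)\|_{L^{s}}^2$ for an appropriate $s<d$, and then \eqref{est:sobolev:bulk} upgrades this to $\|w\eta\|_{L^{s_d^*}}^2$, i.e.\ a gain in integrability by the factor $\theta:=s_d^*/2>1$ on the left versus the $L^{p_*}$ (and $L^{2p'}$, after noting $\frac{2p}{p-1}=(p_*)^*_{d-1}$) norms of $w$ on the right. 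Unwinding $w=|u|^{(\alpha+1)/2}$, this is precisely a reverse Hölder inequality of the shape
\begin{equation*}
\|u\|_{\underline L^{\theta\kappa\alpha}(B(\rho))} \lesssim \Big(C(\alpha,\rho,\sigma)\,\Lambda^\omega(B(\sigma))\Big)^{1/\alpha}\|u\|_{\underline L^{\kappa\alpha}(B(\sigma))}
\end{equation*}
for a fixed base exponent $\kappa$ tied to $2p'$, with $C(\alpha,\rho,\sigma)$ polynomial in $\alpha$ and in $(\sigma-\rho)^{-1}$ after normalizing the volume factors. Iterating over radii $\rho_k=2n(1+2^{-k})$ (so $\rho_k\downarrow 2n$, all inside $B(4n)$) and exponents $\alpha_k=\theta^k\alpha_0$, the exponents diverge to $\infty$ giving $\max_{B(2n)}|u|$ on the left, while the product of constants converges because $\sum_k \alpha_k^{-1}\log(\text{polynomial in }\theta^k)<\infty$ and $\sum_k \alpha_k^{-1}<\infty$; the powers of $\Lambda^\omega$ sum to a finite geometric series whose value is exactly $\sum_k 1/\alpha_k$, which a short computation identifies with $\frac{\delta+1}{2\delta}$ once $\alpha_0$ and $\theta$ are pinned down by the relations $\frac1{p_*}=\frac12-\frac1{2p}+\frac1{d-1}$ and the $q$-Hölder step. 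Finally, a standard covering/translation argument passes from $B(2n)\subset B(4n)$ to $B(y,n)$ and gives the stated $L^{2p'}$-version (the $L^1$ refinement in Theorem~\ref{T1} is then obtained separately by interpolating one more reverse-Hölder step at the bottom of the iteration).

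The main obstacle, and where I would spend the most care, is the bookkeeping of the three competing scaling exponents — the $\alpha$-dependence (must stay polynomial so that $\sum\alpha_k^{-1}\log\alpha_k<\infty$), the radius-difference exponent $(\sigma-\rho)^{-2d/(d-1)}$ coming from Lemma~\ref{L:optimcutoff}, and the volume-normalization factors $|B(\sigma)|/|B(\rho)|$ that appear when converting between $\|\cdot\|_{L^p}$ and $\|\cdot\|_{\underline L^p}$ — and verifying that the Hölder step combining $\|\omega\|_{\underline L^p}$, $\|\omega^{-1}\|_{\underline L^q}$ and the Sobolev exponents closes with a genuine gain $\theta>1$ precisely under $\frac1p+\frac1q<\frac2{d-1}$. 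A secondary subtlety is the discrete chain rule: the identity $\nabla(|u|^{\alpha-1}u)(\ee)$ is not simply $\alpha|u|^{\alpha-1}\nabla u(\ee)$, so one needs the elementary pointwise inequality comparing $\nabla(|u|^{(\alpha+1)/2})$ with $|u|^{(\alpha-1)/2}\nabla u$ up to an $\alpha$-independent constant (valid because $t\mapsto|t|^{(\alpha+1)/2}$ is, after the substitution, uniformly comparable on two-point sets), together with care that the arithmetic-mean convention in $|v|(\ee)$ is consistent with the one used in Lemma~\ref{L:optimcutoff}.
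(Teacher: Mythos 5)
Your overall strategy matches the paper's: test the equation with $\eta^2\tilde u_{2\alpha-1}$ to get a Caccioppoli estimate, replace the naive cutoff by the optimized one of Lemma~\ref{L:optimcutoff}, apply H\"older with $\|\omega^{-1}\|_{L^q}$ and then Sobolev, and run a dyadic Moser iteration. However, there is a genuine gap in the claim that this produces a reverse H\"older inequality purely in $L^p$ norms of $u$, i.e.\ an estimate of the form $\|u\|_{\underline L^{\theta\kappa\alpha}(B(\rho))}\lesssim C^{1/\alpha}\|u\|_{\underline L^{\kappa\alpha}(B(\sigma))}$. The right-hand side of \eqref{est:cutoff} contains $\|\nabla v\|_{L^{p_*}}$ on the annulus, not merely $\|v\|_{L^{p_*}}$; applied with $v=\tilde u_\alpha$ this leaves a gradient of a power of $u$ on the right that cannot be absorbed into a pure $L^p$ norm of $u$ at a slightly larger scale without invoking yet another Caccioppoli estimate, which in turn reintroduces a gradient. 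As written, the iteration inequality you posit does not close.

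The paper resolves this by iterating on the \emph{Sobolev} quantity
\begin{equation*}
\|v\|_{\underline W^{1,p_*}(B(\rho))}:=\|v\|_{\underline L^{p_*}(B(\rho))}+\rho\|\nabla v\|_{\underline L^{p_*}(B(\rho))},
\end{equation*}
proving the one-step improvement \eqref{eq:moser:iterate} in the form
\begin{equation*}
\|\tilde u_{\chi\alpha}\|_{\underline W^{1,p_*}(B(\rho))}^{1/(\chi\alpha)}\leq\left(\frac{c\,\alpha\,\Lambda^\omega(B(2n))^{1/2}}{(1-\rho/\sigma)^{d/(d-1)}}\right)^{1/\alpha}\|\tilde u_\alpha\|_{\underline W^{1,p_*}(B(\sigma))}^{1/\alpha},
\end{equation*}
where the Sobolev embedding handles the $L^{p_*}$ part and the Caccioppoli--cutoff estimate \eqref{eq:moser:s0} together with Young's inequality handles the gradient part. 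Only at the very end, after the iteration has delivered $\max_{B(n)}|u|\lesssim\Lambda^{\cdots}\|u\|_{\underline W^{1,p_*}(B(2n))}$, is the residual gradient at the base level removed via the $\alpha=1$ estimate \eqref{eq:moser:s0b} (linear cutoff), yielding the stated $L^{2p'}$ bound on $B(4n)$. You should replace your claimed pure-$L^p$ reverse H\"older by this Sobolev-norm version; the rest of your bookkeeping (polynomial $\alpha$-dependence, $(\sigma-\rho)^{-2d/(d-1)}$ exponent, geometric series giving the exponent $\frac{\delta+1}{2\delta}$ on $\Lambda^\omega$, discrete chain-rule inequalities, and the subsequent interpolation step down to $L^1$ for Theorem~\ref{T1}) is consistent with the paper.
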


\begin{proof}[Proof of Theorem~\ref{T:bound}]

Throughout the proof we write $\lesssim$ if $\leq$ holds up to a positive constant that depends only on $d,p$ and $q$. For a function $v:\mathbb Z^d$ and $\alpha\geq1$, we set
\begin{equation*}
\tilde v_\alpha:=({\rm sign}\; v)|v|^\alpha.
\end{equation*}

\step 1 Basic energy estimate.

We claim that for every $\alpha\geq1$ and $n,\rho,\sigma\in\mathbb N$ with $n\leq \rho<\sigma\leq 2n$ it holds
\begin{align}\label{eq:moser:s0}
 \|\nabla \tilde u_\alpha\|_{\underline L^\frac{2q}{q+1}( B(\rho))}\lesssim& \alpha\frac{\Lambda^\omega(B(2n))^\frac12}{(1-\frac{\rho}\sigma)^\frac{d}{d-1} }\left(\rho^{-1}\|\tilde u_\alpha\|_{\underline L^{p_*}(B(\sigma))}+\|\nabla \tilde u_\alpha\|_{\underline L^{p_*}( B(\sigma))}\right),\\
 \|\nabla  u\|_{\underline L^\frac{2q}{q+1}( B(\rho))}\lesssim&\frac{\Lambda^\omega(B(2n))^\frac12}{\sigma-\rho} \|u\|_{\underline L^{2p'}(B(\sigma))},\label{eq:moser:s0b}
\end{align}
where $\frac1{p_*}=\frac12-\frac1{2p}+\frac1{d-1}$ and $p'=\frac{p}{p-1}$.

\substep{1.1} Let $\eta:\mathbb Z^d\to[0,\infty)$ be such that $\eta=0$ in $\mathbb Z^d\setminus B(2n-1)$. We claim that for every $\alpha\geq1$
\begin{equation}\label{eq:moser:s1}
 \sum_{{\rm e\in \mathbb B^d}}\eta^2(\ee)\omega(\ee)\nabla \tilde u_\alpha(\ee)^2\leq \frac{256\alpha^4}{(2\alpha-1)^2}\sum_{\ee\in \mathbb B^d}\omega(\ee)(|u^\alpha|(\ee))^2(\nabla \eta(\ee))^2,
\end{equation}
where $\eta^2(\ee)=\frac12 (\eta^2(\overline \ee)+\eta^2(\underline \ee))$ and $|u^\alpha|(\ee)=\frac12(|u(\overline \ee)|^\alpha+|u(\underline \ee)|^\alpha)$, see Section~\ref{sec:notation}. Using $\mathcal L^\omega u\stackrel{\eqref{Lomegadiv}}{=}-\nabla^*(\omega\nabla u)=0$ in $\mathbb Z^d$ and the summation by parts formula \eqref{sumbyparts} with $F=\omega\nabla u$ and $f=\eta^2\tilde u_{2\alpha-1}$, we obtain
\begin{align}\label{eq:moser:s1:1}
0=& \sum_{\ee\in\mathbb B^d} \omega(\ee)\nabla u(\ee)\nabla (\eta^2\tilde u_{2\alpha-1})(\ee)\notag\\
 =&\sum_{\ee\in \mathbb B^d}2\eta(\ee)\tilde u_{2\alpha-1}(\ee)\omega(\ee)\nabla u(\ee)\nabla \eta(\ee)+\sum_{\ee\in \mathbb B^d}\eta^2(\ee)\omega(\ee)\nabla u(\ee)\nabla \tilde u_{2\alpha-1}(\ee),
\end{align}
where we use the discrete chain rule \eqref{chainrule} and $\nabla \eta^2(\ee)=\eta^2(\overline \ee)-\eta^2(\underline \ee)=(\eta(\overline \ee)-\eta(\underline \ee))(\eta(\overline \ee)+\eta(\underline \ee))=2\nabla \eta(\ee)\eta(\ee)$. Estimate \eqref{est:d:chain:2} implies $\nabla \tilde u_\alpha(\ee)^2\leq \frac{\alpha^2}{2\alpha-1}(\nabla u(\ee)\nabla \tilde u_{2\alpha-1}(\ee))$ for all $\ee\in\mathbb B^d$ and thus
\begin{equation}\label{eq:moser:s1:2}
\sum_{\ee\in\mathbb B^d}\eta^2(\ee)\omega(\ee)\nabla u(\ee)\nabla \tilde u_{2\alpha-1}(\ee)\geq \frac{2\alpha-1}{\alpha^2}\sum_{\ee\in\mathbb B^d}\eta^2(\ee)\omega(\ee)\nabla \tilde u_\alpha(\ee)^2.
\end{equation}
To estimate the second term, we use the pointwise inequality $|\tilde u_{2\alpha-1}|(\ee) |\nabla u(\ee)|\leq 8|\nabla \tilde u_\alpha(\ee)|u|^\alpha(\ee)$ (see \eqref{est:d:chain:3}) and thus by Young's inequality (together with elementary inequality $\eta(\ee)^2\leq\eta^2(\ee)$)
\begin{align}\label{eq:moser:s1:3}
2\sum_{\ee\in \mathbb B^d}\omega(\ee)\nabla u(\ee)\eta(\ee)\nabla \eta(\ee)\tilde u_{2\alpha-1}(\ee)\leq&\frac{2\alpha-1}{2\alpha^2}\sum_{\ee\in\mathbb B^d}\omega(\ee)\eta^2(\ee)(\nabla \tilde u_\alpha(\ee))^2\notag\\
&+\frac{128 \alpha^2}{2\alpha-1}\sum_{\ee\in \mathbb B^d}\omega(\ee)(|u^\alpha|(\ee))^2(\nabla \eta(\ee))^2.
\end{align}
Combining \eqref{eq:moser:s1:1}--\eqref{eq:moser:s1:3}, we obtain \eqref{eq:moser:s1}.

\substep{1.2} Proof of \eqref{eq:moser:s0}. By minimizing the right-hand side of \eqref{eq:moser:s1} over all $\eta:\mathbb Z^d\to[0,\infty)$ satisfying $\eta=1$ on $B(\rho)$ and $\eta=0$ in $\mathbb Z^d\setminus B(\sigma-1)$, we obtain in view of Lemma~\ref{L:optimcutoff}
\begin{equation*}
\sum_{{\rm e\in B(\rho)}}\omega(\ee)(\nabla \tilde u_\alpha(\ee))^2\lesssim\alpha^2(\sigma-\rho)^{-\frac{2d}{d-1}}\|\omega\|_{L^p(B(\sigma))}\left(\|\nabla \tilde u_\alpha\|_{L^{p_*}(B(\sigma))}^2+\rho^{-2}\|\tilde u_\alpha\|_{L^{p_*}(B(\sigma))}^2\right).
\end{equation*}

By H\"older inequality, we obtain
\begin{eqnarray*}
 \|\nabla \tilde u_\alpha\|_{L^{\frac{2q}{q+1}}(B(\rho))}^2&\leq&  \|\omega^{-1}\|_{L^q(B(\rho))}\biggl(\sum_{\ee \in B(\rho)}\omega(\ee)(\nabla \tilde u_\alpha (\ee))^2\biggr)\\
 &\lesssim&\alpha^2(\sigma-\rho)^{-\frac{2d}{d-1}}\|\omega^{-1}\|_{L^q(B(\rho))}\|\omega\|_{L^p(B(\sigma))}\left(\|\nabla \tilde u_\alpha\|_{L^{p_*}(B(\sigma))}^2+\rho^{-2}\|\tilde u_\alpha\|_{L^{p_*}(B(\sigma))}^2\right)
\end{eqnarray*}
and the claim \eqref{eq:moser:s0} follows.

\substep{1.3} Estimate \eqref{eq:moser:s0b} is a straightforward consequence of \eqref{eq:moser:s1} (with $\alpha=1$ and a 'linear' cut-off function $\eta$ satisfying $\eta(x)=1$ for $x\in B(\rho)$, $\eta=0$ on $\mathbb Z^d\setminus B(\sigma-1)$ and $\nabla \eta(\ee)=(\sigma - \rho)^{-1}$ for all $\ee \in S(k)$ and $k\in\{\rho,\dots,\sigma-1\}$) and an application of H\"older's inequality.

\step 2 One-step improvement.

Fix $\alpha\geq1$ and $\rho,\sigma\in\mathbb N$ with $n\leq \rho<\sigma\leq 2n$. We claim that there exists $c=c(d,p,q)\in[1,\infty)$ such that
\begin{align}
 &\|\tilde u_{\chi\alpha}\|_{\underline W^{1,p_*}(B(\rho))}^\frac1{\chi\alpha} \leq \left(\frac{c \alpha\Lambda^\omega( B(2n))^\frac12}{(1-\frac\rho\sigma)^{\frac{d}{d-1}}} \right)^\frac1{\alpha}\|\tilde u_\alpha\|_{\underline W^{1,p_*}(B(\sigma))}^\frac1{\alpha},\label{eq:moser:iterate}
\end{align}

where $\chi:=1+\delta>1$ with $\delta=\frac1{d-1}-\frac1{2q}-\frac1{2p}>0$ and for $p\in[1,\infty)$ and any pair $(y,n)\in\mathbb Z^d\times \mathbb N$ the (normalized) Sobolev norm $\|\cdot\|_{\underline W^{1,p}(B(y,n))}$ is defined for any $v:\mathbb Z^d\to\R$ as
\begin{equation*}
\|v\|_{\underline W^{1,p}(B(y,n))}:=\|v\|_{\underline L^p(B(y,n))}+n\|\nabla v\|_{\underline L^p(B(y,n))}.
\end{equation*}
In order to establish \eqref{eq:moser:iterate}, we use Step~1 and the following two estimates
\begin{align}
 \|\tilde u_{\chi\alpha}\|_{\underline L^{p_*}(B(\rho))}=&\|\tilde u_{\alpha}\|_{\underline L^{\chi p_*}(B(\rho))}^\chi\lesssim \left(\|\tilde u_{\alpha}\|_{\underline L^{p_*}(B(\rho))}+\rho\|\nabla \tilde u_{\alpha}\|_{\underline L^{p_*}(B(\rho))}\right)^\chi\label{eq:moser:iterate:2}\\
 \rho\|\nabla \tilde u_{\chi\alpha}\|_{\underline L^{p_*}(B(\rho))}\lesssim&\rho\|\nabla \tilde u_{\alpha}\|_{\underline L^\frac{2q}{q+1}(B(\rho))}\|u^\alpha\|_{\underline L^1(B(\rho))}^\delta\lesssim (\rho\|\nabla \tilde u_{\alpha}\|_{\underline L^\frac{2q}{q+1}(B(\rho))})^\chi+\|u^\alpha\|_{\underline L^1(B(\rho))}^\chi.\label{eq:moser:iterate:1}
\end{align}
Estimate \eqref{eq:moser:iterate:2} is a consequence of Sobolev inequality (note that $\chi\in(1,\frac{d}{d-1}]$) and \eqref{eq:moser:iterate:1} follows from
\begin{align*}
 \|\nabla \tilde u_{\alpha(1+\delta)}\|_{L^{p_*}(B(\rho))}\stackrel{\eqref{est:d:chain:1}}{\leq}& (1+\delta)\biggl(\sum_{\ee\in B(\rho)}|\nabla \tilde u_\alpha(\ee)|^{p_*}(2|u|^{\alpha \delta}(\ee))^{p_*}\biggr)^\frac1{p_*}\\
 \leq&(1+\delta)\|\nabla \tilde u_{\alpha}\|_{L^\frac{2q}{q+1}(B(\rho))}\biggl(\sum_{\ee\in B(\rho)}(2|u|^{\alpha \delta}(\ee))^{\frac1\delta}\biggr)^\delta
\end{align*}
and an application of Young's inequality with exponent $\chi$ and $\frac{\chi}{\chi-1}=\frac{\chi}\delta$ (recall $\chi=1+\delta$). \\
Appealing to estimates \eqref{eq:moser:s0}, \eqref{eq:moser:iterate:1}, \eqref{eq:moser:iterate:2} and Jensen inequality in the form $\|\cdot\|_{\underline L^1(B(\rho))}\leq \|\cdot\|_{\underline L^{p_*}(B(\rho))}$, we obtain
\begin{eqnarray}\label{eq:moser:iterate:3}
  &&\|\tilde u_{\chi\alpha}\|_{\underline L^{p_*}(B(\rho))}+\rho\|\nabla \tilde u_{\chi\alpha}\|_{\underline L^{p_*}(B(\rho))}\notag\\
 &\stackrel{\eqref{eq:moser:iterate:1},\eqref{eq:moser:iterate:2}}{\lesssim}&\left(\|\tilde u_{\alpha}\|_{\underline L^{p_*}(B(\rho))}+\rho\|\nabla \tilde u_{\alpha}\|_{\underline L^{p_*}(B(\rho))}\right)^\chi+(\rho\|\nabla \tilde u_{\alpha}\|_{\underline L^\frac{2q}{q+1}(B(\rho))})^\chi\notag\\
 &\stackrel{\eqref{eq:moser:s0}}{\lesssim}&\left(\alpha\left(1-\frac{\rho}\sigma\right)^{-\frac{d}{d-1}} \Lambda^\omega( B(2n))^\frac12\right)^\chi\left(\|\tilde u_\alpha\|_{\underline L^{p_*}(B(\sigma))}+\rho\|\nabla \tilde u_\alpha\|_{\underline L^{p_*}(B(\sigma))}\right)^\chi,
\end{eqnarray}
where we used for the last estimate that $\alpha\geq1$, $0<\rho<\sigma$ and $\Lambda^\omega( B(2n))\geq1$. Clearly \eqref{eq:moser:iterate:3} implies the claimed estimate \eqref{eq:moser:iterate}

\step 3 Iteration.

For $\nu\in\mathbb N\cup \{0\}$, set $\alpha_\nu=\chi^{\nu-1}$ and $\rho_\nu=n+\lfloor \frac{n}{2^\nu}\rfloor$. Then for any $\nu\in\mathbb N$ satisfying $2^\nu\leq n$, estimate \eqref{eq:moser:iterate} (with $\alpha=\alpha_\nu$, $\rho=\rho_\nu$ and $\sigma=\rho_{\nu-1}$) implies that there exists $c=c(d,p,q)\in[1,\infty)$ such that
\begin{equation}\label{est:moser:iterate}
\|\tilde u_{\chi^\nu}\|_{\underline W^{1,p_*}(B(\rho_\nu))}^\frac1{\chi^\nu} \leq \left(c \Lambda^\omega( B(2n))^\frac12 (4\chi)^{\nu} \right)^\frac1{\chi^{\nu-1}}\|\tilde u_{\chi^{\nu-1}}\|_{\underline W^{1,p_*}(B(\rho_{\nu-1}))}^\frac1{\chi^{\nu-1}},
\end{equation}
where we used the elementary estimate
\begin{align*}
\left(1-\frac{\rho_{\nu}}{\rho_{\nu-1}}\right)^{-\frac{d}{d-1}}=\left(\frac{n+\lfloor\tfrac{n}{2^{\nu-1}}\rfloor}{\lfloor\tfrac{n}{2^{\nu-1}}\rfloor-\lfloor\tfrac{n}{2^{\nu}}\rfloor}\right)^{\frac{d}{d-1}}\leq \left(2^{2+\nu}\right)^\frac{d}{d-1}\stackrel{d\geq2}{\leq}  4^{2+\nu}.
\end{align*}
Set $\hat\nu(n):=\max\{\nu\in\mathbb N\,|\,2^\nu\leq n\}$. Using \eqref{est:moser:iterate} $\hat \nu(n)$-times, we obtain
\begin{align}\label{est:moser:almostfinal}
\| u\|_{\underline L^{p_*\chi^{\hat \nu(n)}}(B(n))}\leq& \prod_{\nu=1}^{\hat \nu(n)} \left(c \Lambda^\omega( B(2n))^\frac12(4\chi)^\nu \right)^{\frac1{\chi^{\nu-1}}}\|u\|_{\underline W^{1,p_*}(B(2n))}\notag\\
 \leq&\left(c\Lambda^\omega( B(2n))^\frac12\right)^{\sum_{\nu=0}^\infty \frac1{\chi^{\nu}}}\left(4\chi\right)^{\sum_{\nu=0}^\infty \frac{\nu+1}{\chi^{\nu}}}\| u\|_{\underline W^{1,p_*}(B(2n))}.
\end{align}
To estimate the right-hand side of \eqref{est:moser:almostfinal}, we use \eqref{eq:moser:s0b}, Jensen's inequality and the fact that $p_*<\frac{2q}{q+1}\leq 2\leq 2p'$ 
\begin{align*}
\|\nabla u\|_{\underline L^{p_*}(B(2n))}\leq \|\nabla u\|_{\underline L^{\frac{2q}{q+1}}(B(2n))}\stackrel{\eqref{eq:moser:s0b}}{\lesssim} n^{-1}\Lambda^\omega( B(4n))^\frac12 \|u \|_{\underline L^{2p'}(B(4n))},\quad \|u\|_{L^{p_*}(B(2n))}\lesssim\|u\|_{\underline L^{2p'}(B(4n))}.
\end{align*}
Since $\Lambda^\omega\geq1$ and $\sum_{\nu=0}^\infty (1+\nu) \chi^{-\nu}\lesssim1$, we obtain 
\begin{align*}
 \|u\|_{L^\infty(B(n))}\leq& |B(n)|^\frac1{p_*\chi^{\hat \nu(n)}}\| u\|_{\underline L^{p_*\chi^{\hat \nu(n)}}(B(n))}\\
 \lesssim& |B(n)|^\frac1{p_*\chi^{\hat \nu(n)}} \Lambda^\omega( B(2n))^{\frac{1}{2}(\frac1{1-\chi^{-1}}-1)}\|u\|_{\underline W^{1,p_*}(B(2n))}\\
 \lesssim& |B(n)|^\frac1{p_*\chi^{\hat \nu(n)}} \Lambda^\omega( B(4n))^{\frac{1}{2}\frac{\chi}{\chi-1}}\|u\|_{L^{2 p'}(B(4n))}.
\end{align*}
Hence, it is left to show that $|B(n)|^\frac1{\chi^{\hat \nu(n)}}\lesssim1$ (recall $\chi=1+\delta$). Assuming $n\in\mathbb N$ is sufficiently large, we have $\hat\nu(n)\geq \frac12\log_2 n$ and thus
\begin{align}\label{est:finalvolume}
|B(n)|^\frac1{\chi^{\hat \nu(n)}}\lesssim n^{\frac{d}{\chi^{\frac12 \log_2 n}}}=n^{\frac{d}{n^{\frac12 \log_2 \chi}}}=\exp(\log(n^{\frac{d}{n^{\frac12 \log_2 \chi}}}))=\exp(\frac{d}{n^{\frac12 \log_2 \chi}}\log(n))\lesssim1,
\end{align}
which finishes the proof.

\end{proof}

Using a well-known iteration argument (see e.g.\ \cite[Corollary~3.9]{ADS15}), we refine the statement of Theorem~\ref{T:bound} and obtain
\begin{corollary}\label{C:est:gamma}
Fix $d\geq3$, $\omega\in \Omega$ and let $p,q\in(1,\infty]$ be such that $\frac1p+\frac1q<\frac2{d-1}$. For every $\gamma\in(0,1]$ there exists $c=c(d,p,q,\gamma)\in[1,\infty)$ such that solutions of $\nabla^*({\omega}\nabla u)=0$ in $\mathbb Z^d$ satisfy for every $n\in \mathbb N$
\begin{equation*}
 \max_{x\in B(n)}|u(x)|\leq c\Lambda^\omega( B(2n))^{\frac{\delta+1}{2\delta\gamma}}\|u\|_{\underline L^{2p'\gamma}(B(2n))},
\end{equation*}
where $\delta=\frac{1}{d-1}-\frac1{2p}-\frac1{2q}>0$, $p'=\frac{p}{p-1}$ and $\Lambda^\omega$ is defined in \eqref{def:lambda}.
\end{corollary}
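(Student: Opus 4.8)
The plan is to upgrade the exponent $2p'$ in the $L^\infty$-bound of Theorem~\ref{T:bound} to an arbitrary small exponent $2p'\gamma$ by a standard interpolation/iteration trick, at the cost of a larger power of $\Lambda^\omega$. The idea is the classical observation that a reverse-H\"older-type estimate of the form ``$L^\infty$ controlled by $L^s$'' automatically self-improves to ``$L^\infty$ controlled by $L^t$'' for any $t\in(0,s)$, because the $L^\infty$ norm appears on the right after applying H\"older with an interpolation exponent, and one absorbs it using Young's inequality on a nested sequence of balls.

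Concretely, I would first pass from balls $B(n)$ and $B(4n)$ in Theorem~\ref{T:bound} to a statement on a shrinking family of radii. Fix $\theta\in[1/2,1)$ and note that the argument of Theorem~\ref{T:bound} (translation invariance plus the scaling of the radii $n,2n,4n$) gives, for concentric balls $B(r)\subset B(R)$ with $r<R\le 2r$,
\begin{equation*}
\max_{x\in B(r)}|u(x)|\le c\left(\frac{R}{R-r}\right)^{\kappa}\Lambda^\omega(B(R))^{\frac{\delta+1}{2\delta}}\|u\|_{\underline L^{2p'}(B(R))}
\end{equation*}
for a suitable exponent $\kappa=\kappa(d)$ coming from the cut-off losses in Step~3 of that proof. (If one prefers to avoid tracking $\kappa$, one can instead iterate between $B(n)$ and $B(2n)$ directly, using dyadic radii $r_k = n(1+2^{-k})$ and the already-proven Theorem~\ref{T:bound} applied on each annulus-adapted pair of balls.) Denote $M(r):=\max_{x\in B(r)}|u(x)|$ and $A:=\Lambda^\omega(B(2n))^{\frac{\delta+1}{2\delta}}$.

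The key step is interpolation: for any $\gamma\in(0,1)$ write $\frac{1}{2p'}=\frac{\gamma}{2p'}+\frac{1-\gamma}{\infty}$, so that by H\"older on $B(R)$,
\begin{equation*}
\|u\|_{\underline L^{2p'}(B(R))}\le \|u\|_{\underline L^{2p'\gamma}(B(R))}^{\gamma}\,M(R)^{1-\gamma}.
\end{equation*}
Plugging this into the displayed estimate and using Young's inequality with exponents $\tfrac1\gamma$ and $\tfrac1{1-\gamma}$ yields, for $r<R\le 2r$,
\begin{equation*}
M(r)\le \tfrac12 M(R) + c_\gamma\left(\frac{R}{R-r}\right)^{\kappa/\gamma} A^{1/\gamma}\|u\|_{\underline L^{2p'\gamma}(B(2n))}.
\end{equation*}
Now iterate this along the radii $r_k=n(1+2^{-k})$, $k\ge0$, with $r=r_{k+1}$, $R=r_k$: the prefactors $\left(r_k/(r_k-r_{k+1})\right)^{\kappa/\gamma}\lesssim (2^{k+1})^{\kappa/\gamma}$ are summable against the geometric factor $2^{-k}$ from the iterated $\tfrac12$'s, the $\tfrac12 M(r_k)$ terms telescope (the standard lemma on sequences, e.g.\ \cite[Lemma~3.8]{ADS15} or the iteration in \cite[Corollary~3.9]{ADS15}), and $M(r_k)\le M(2n)<\infty$ makes the telescoping legitimate. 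One ends with $M(n)\lesssim_\gamma A^{1/\gamma}\|u\|_{\underline L^{2p'\gamma}(B(2n))}$, which is exactly the claimed bound once one recalls $A^{1/\gamma}=\Lambda^\omega(B(2n))^{\frac{\delta+1}{2\delta\gamma}}$; the case $\gamma=1$ is Theorem~\ref{T:bound} (modulo replacing $B(4n)$ by $B(2n)$, which the dyadic iteration already arranges).

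The main obstacle is purely bookkeeping: one has to make sure the finiteness of $M(2n)$ (needed to run the absorption) is available — it is, since $u$ is real-valued and $B(2n)$ is finite — and one has to track the cut-off exponent $\kappa$ through the scaling of Theorem~\ref{T:bound} so that the series $\sum_k (2^{k})^{\kappa/\gamma}2^{-k}$ still converges; this forces the $\tfrac12$ in the absorbed term (rather than, say, $1-\epsilon$) and is the reason one iterates geometrically rather than in a single step. Since the paper explicitly points to \cite[Corollary~3.9]{ADS15} for ``a well-known iteration argument,'' I would simply invoke that lemma: apply it with the choice $f(r)=M(r)$, the bad term $g=c_\gamma A^{1/\gamma}\|u\|_{\underline L^{2p'\gamma}(B(2n))}$, and the parameters dictated by the displayed recursive inequality above, concluding the desired estimate directly.
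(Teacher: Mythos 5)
Your high-level strategy is sound and is a genuine alternative to the paper's argument, but there is a concrete numerical flaw in the explicit iteration you sketch. The paper's proof (Steps~1--2 of Corollary~\ref{C:est:gamma}) proceeds by a \emph{multiplicative} Moser-style iteration: it keeps the $(1-\gamma)$-power on the $L^\infty$-term, so after $\nu$ steps the polynomially growing cut-off prefactor $2^{\nu s}$ is raised to the geometrically decaying exponent $(1-\gamma)^{\nu-1}$. This is why the dyadic radii $\rho_\nu=2n-\lfloor n/2^\nu\rfloor$ are harmless there: the product $\prod_\nu (2^{\nu s})^{(1-\gamma)^{\nu-1}}$ converges regardless of $s$ or $\gamma$, and the residual $L^\infty$-factor is controlled at the end by a volume argument. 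You instead apply Young's inequality to convert the interpolation bound into an \emph{additive} absorption inequality $M(r)\le \tfrac12 M(R)+c_\gamma\bigl(R/(R-r)\bigr)^{\kappa/\gamma}A^{1/\gamma}\|u\|_{\underline L^{2p'\gamma}}$ and then iterate. This is a legitimate alternative (the Giaquinta absorption lemma), and it would also spare you the volume-factor bookkeeping at the end.

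The flaw is in the explicit iteration you propose. With the dyadic radii $r_k=n(1+2^{-k})$ and a fixed absorption constant $\tfrac12$, the $k$-th contribution to the sum is of order $2^{-k}\cdot(2^{k})^{\kappa/\gamma}=2^{k(\kappa/\gamma-1)}$, which diverges unless $\kappa/\gamma<1$. In the present setting $\kappa$ corresponds to the paper's exponent $s=\frac{d}{2}\bigl(1+\frac1q+(\frac1p+\frac1q)\frac1\delta\bigr)\ge\frac d2>1$, and $\gamma$ can be arbitrarily small, so this fails badly; your remark that "the $\tfrac12$ in the absorbed term" is what is needed points in the wrong direction --- a \emph{larger} absorption constant is worse, not better, and $\tfrac12$ is not small enough here. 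The fix is exactly what the standard absorption lemma does internally: either choose the absorption constant $\varepsilon$ to depend on $\kappa/\gamma$ (say $\varepsilon<2^{-\kappa/\gamma}$), or keep $\varepsilon=\tfrac12$ but replace the dyadic radii by a geometric sequence $t_{i+1}-t_i\propto\tau^i$ with $\tau$ close to $1$ chosen so that $\tfrac12\tau^{-\kappa/\gamma}<1$. Since you explicitly fall back on invoking such a lemma, the argument can be closed, but as written the dyadic-radii claim is incorrect and should be replaced by a direct appeal to the lemma (with attention to the $\gamma$-dependence of the resulting constant), or by the paper's multiplicative iteration.
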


\begin{proof}[Proof of Theorem~\ref{T1}]
 The choice $\gamma=\frac1{2p'}$ in Corollary~\ref{C:est:gamma} yield \eqref{est:T:boundl1} for $y=0\in\mathbb Z^d$ and by translation we obtain the general claim.
\end{proof}

\begin{proof}[Proof of Corollary~\ref{C:est:gamma}]
Throughout the proof we write $\lesssim$ if $\leq$ holds up to a positive constant that depends only on $d,p$ and $q$.

\step 1 We claim that for every $N,N'\in\mathbb N$ with $N'<N$ 
\begin{equation}\label{est:scaling}
 \max_{x\in B(N')}|u(x)|\lesssim \frac{\Lambda^\omega( B(N))^{\frac{\delta+1}{2\delta}}}{(1-\frac{N'}{N})^{s}}\|u\|_{\underline L^\frac{2p}{p-1}(B(N))},
\end{equation}
where $s:=\frac{d}2(1+\frac1q+(\frac1p+\frac1q)\frac1\delta)$. Without loss of generality we assume that $N-N'\geq 4$, since otherwise \eqref{est:scaling} follows from the discrete $L^\infty$-$L^{1}$-estimate. Theorem~\ref{T:bound} and a simple translation argument yield for every $y\in B(N')$ 
\begin{align*}
\max_{x\in B(y,\lfloor \frac{N-N'}4\rfloor)}|u(x)|\lesssim& \Lambda^\omega( B(y,N-N'))^{\frac{\delta+1}{2\delta}}\|u\|_{\underline L^\frac{2p}{p-1}(B(y,N-N'))}\\
\lesssim&\left(\frac{N^d}{(N-N')^d}\right)^{\frac{p-1}{2p}+(\frac1q+\frac1p)\frac{\delta+1}{2\delta}}\Lambda(B(N))^{\frac{\delta+1}{2\delta}}\|u\|_{\underline L^\frac{2p}{p-1}(B(N))}
\end{align*}
and estimate \eqref{est:scaling} follows.

\step 2 Iteration. Fix $\gamma\in(0,1)$. For $\nu\in\mathbb N\cup\{0\}$, we set $\rho_\nu=2n-\lfloor \frac{n}{2^\nu}\rfloor$. Combining the elementary interpolation inequality
\begin{equation}\label{est:interpolate}
\|u\|_{\underline L^{2p'}(B(\rho_\nu))}\leq \|u\|_{\underline L^{2p'\gamma}(B(\rho_\nu))}^\gamma\|u\|_{L^{\infty}(B(\rho_\nu))}^{1-\gamma}
\end{equation}
with the estimate \eqref{est:scaling}, we obtain for every $\nu\in\mathbb N$
\begin{eqnarray}\label{est:refine:iterate}
\|u\|_{L^\infty(B_{\rho_{\nu-1}})}&\stackrel{\eqref{est:scaling}}{\lesssim}& \Lambda^\omega( B(\rho_{\nu}))^\frac{\delta+1}{2\delta}(1-\tfrac{\rho_{\nu-1}}{\rho_{\nu}})^{-s}\|u\|_{\underline L^{2p'}(B(\rho_{\nu}))}\notag\\
&\stackrel{\eqref{est:interpolate}}{\leq}& \Lambda^\omega( B(\rho_{\nu}))^\frac{\delta+1}{2\delta}(1-\tfrac{\rho_{\nu-1}}{\rho_{\nu}})^{-s}\|u\|_{\underline L^{2p'\gamma}(B(\rho_\nu)}^\gamma\|u\|_{L^{\infty}(B(\rho_{\nu}))}^{1-\gamma}\notag\\
&\leq&2^{\nu s}C\|u\|_{\underline L^{2p'\gamma}(B(2n))}^\gamma\|u\|_{L^{\infty}(B(\rho_{\nu}))}^{1-\gamma}
\end{eqnarray}
with $C=c\Lambda^\omega( B(2n))^{\frac{\delta+1}{2\delta}}$ and a suitable constant $c=c(d,p,q)\in[1,\infty)$, where we used for the last estimate $\rho_\nu\geq n$ for all $\nu\in\mathbb N$ and $(1-\frac{\rho_{\nu-1}}{\rho_{\nu}})^{-s}\leq (2^{2+\nu})^s$.

Iterating \eqref{est:refine:iterate} from $\nu=1$ to $\hat\nu(n):=\max\{\nu\in\mathbb N\,|\,2^\nu\leq n\}$, we obtain
\begin{eqnarray*}
\| u\|_{L^{\infty}(B(n))}&=&\| u\|_{L^{\infty}(B(\rho_{0}))}\notag\\
&\stackrel{\eqref{est:refine:iterate}}{\leq}& 4^{s\sum_{\nu=0}^{\hat \nu(n)-1}(\nu+1)(1-\gamma)^\nu}\left(C\|u\|_{\underline L^{2p'\gamma}(B(2n))}^\gamma\right)^{\sum_{\nu=0}^{\hat \nu(n)-1}(1-\gamma)^\nu}\|u\|_{L^{\infty}(B(\rho_{\hat \nu(n)}))}^{(1-\gamma)^{\hat \nu(n)}}.
\end{eqnarray*}
Using $\sum_{\nu=0}^\infty(\nu+1)(1-\gamma)^\nu\lesssim1$, $\sum_{\nu=0}^{\hat \nu(n)-1}(1-\gamma)^\nu=\frac1\gamma(1-(1-\gamma)^{\hat \nu(n)})$, $C\geq1$ and the discrete $L^\infty$-$L^1$-estimate, we obtain
\begin{align*}
\| u\|_{L^{\infty}(B(n))}\leq c\Lambda^\omega( B(2n))^\frac{\delta+1}{2\delta\gamma}\|u\|_{\underline L^{2p'\gamma}(B(2n))}|B(n)|^{\frac{(1-\gamma)^{\hat \nu(n)}}{2p'\gamma}},
\end{align*}
where $c=c(\gamma,d,p,q)\in[1,\infty)$. Finally, a similar calculation as in \eqref{est:finalvolume} yields $|B(n)|^{\frac{(1-\gamma)^{\hat \nu(n)}}{2p'\gamma}}\lesssim c(\gamma)\in[1,\infty)$, which finishes the proof.
\end{proof}

\appendix

\section{Technical estimates}

We recall some estimates proven in \cite[Lemma A.1]{ADS15} that we used in the proof of Theorem~\ref{T1}.
\begin{lemma}[\cite{ADS15}, Lemma A.1] For $a\in\R$ and $\alpha\in \R\setminus\{0\}$, set $\tilde a_\alpha=|a|^\alpha {\rm sign} a$.
\begin{enumerate}[(i)]
\item For all $a,b\in\R$ and any $\alpha,\beta\neq0$
\begin{equation}\label{est:d:chain:1}
 |\tilde a_\alpha-\tilde b_\alpha|\leq \left(1\vee \left|\frac\alpha\beta\right|\right)|\tilde a_\beta-\tilde b_\beta|(|a|^{\alpha-\beta}+|b|^{\alpha-\beta})
\end{equation}
\item For all $a,b\in\R$ and $\alpha>\frac12$
\begin{equation}\label{est:d:chain:2}
 (\tilde a_\alpha-\tilde b_\alpha)^2\leq \frac{\alpha^2}{2\alpha-1}(a-b)(\tilde a_{2\alpha-1}-\tilde b_{2\alpha-1})
\end{equation}
\item For all $a,b\in\R$ and $\alpha\geq\frac12$
\begin{equation}\label{est:d:chain:3}
 (|a|^{2\alpha-1}+|b|^{2\alpha-1})(a-b)\leq 4|\tilde a_\alpha-\tilde b_\alpha|(|a|^\alpha+|b|^\alpha).
\end{equation}

\end{enumerate}

\end{lemma}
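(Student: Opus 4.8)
These three inequalities are \cite[Lemma~A.1]{ADS15}; the plan is to reduce each of them to an elementary one-variable estimate. First I would use that $a,b$ enter only through $\tilde a_\gamma=|a|^\gamma\,{\rm sign}\,a$, so that by the parity $\widetilde{(-a)}_\gamma=-\tilde a_\gamma$ and the (partial) symmetry in $(a,b)$ it is enough to treat two configurations. In the \emph{equal-sign case} we may take $a\ge b\ge0$, so every difference becomes $\tilde a_\gamma-\tilde b_\gamma=a^\gamma-b^\gamma$; in the \emph{opposite-sign case} we may take $a>0>b$, so $\tilde a_\gamma-\tilde b_\gamma=|a|^\gamma+|b|^\gamma$ and all three claims turn into comparisons of monomials in $|a|,|b|$. (In (ii) and (iii) the remaining sub-case $a<b$ makes the left-hand side nonpositive, hence trivial; $a=b$ and $b=0$ sit inside the equal-sign discussion; and the sign patterns of $\alpha,\beta$ besides the one I spell out are handled by the same computation.)

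For the equal-sign case the workhorse is $a^\gamma-b^\gamma=\gamma\int_b^a t^{\gamma-1}\,dt$ for $0\le b<a$. For (i), with $\alpha,\beta>0$, I would substitute $s=t^\beta$ to get $a^\alpha-b^\alpha=\tfrac{\alpha}{\beta}\int_{b^\beta}^{a^\beta}s^{\alpha/\beta-1}\,ds$ and then bound $s^{\alpha/\beta-1}$ on $[b^\beta,a^\beta]$ by its value at the right endpoint when $\alpha\ge\beta$ and at the left endpoint when $\alpha<\beta$ (where additionally $\tfrac{\alpha}{\beta}\le1$), which gives $a^\alpha-b^\alpha\le(1\vee\tfrac{\alpha}{\beta})(a^\beta-b^\beta)(a^{\alpha-\beta}+b^{\alpha-\beta})$. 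For (ii), I would apply Cauchy--Schwarz to $a^\alpha-b^\alpha=\alpha\int_b^a 1\cdot t^{\alpha-1}\,dt$, obtaining $(a^\alpha-b^\alpha)^2\le\alpha^2(a-b)\int_b^a t^{2\alpha-2}\,dt=\tfrac{\alpha^2}{2\alpha-1}(a-b)(a^{2\alpha-1}-b^{2\alpha-1})$ (using $\alpha>\tfrac12$). For (iii) a lower bound on $\int_b^a t^{2\alpha-1}\,dt$ is needed instead: splitting $[b,a]$ at its midpoint and using that $t\mapsto t^{2\alpha-1}$ is nondecreasing (here $\alpha\ge\tfrac12$) yields $\int_b^a t^{2\alpha-1}\,dt\ge 2^{-2\alpha}(a-b)(a^{2\alpha-1}+b^{2\alpha-1})$, hence $(a^{2\alpha-1}+b^{2\alpha-1})(a-b)\le\tfrac{2^{2\alpha-1}}{\alpha}(a^{2\alpha}-b^{2\alpha})$; since $\tfrac{2^{2\alpha-1}}{\alpha}\le4$ for $\tfrac12\le\alpha\le1$, that range is done, and for $\alpha\ge1$ I would normalize $a=1$ and verify that $b\mapsto4(1-b^{2\alpha})-(1+b^{2\alpha-1})(1-b)$ is concave on $[0,1]$, nonnegative at $b=0$ and zero at $b=1$, so nonnegative throughout.

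For the opposite-sign case all three inequalities are polynomial in $|a|,|b|$ with nonnegative coefficients, so they reduce to term-by-term comparisons. For (i), expanding $(|a|^\beta+|b|^\beta)(|a|^{\alpha-\beta}+|b|^{\alpha-\beta})$ already contains $|a|^\alpha$ and $|b|^\alpha$. For (ii), since $\tfrac{\alpha^2}{2\alpha-1}\ge1$ (equivalently $(\alpha-1)^2\ge0$) it suffices to show $(|a|^\alpha+|b|^\alpha)^2\le(|a|+|b|)(|a|^{2\alpha-1}+|b|^{2\alpha-1})$, which after expansion reduces to $2|a|^\alpha|b|^\alpha\le|a|\,|b|^{2\alpha-1}+|b|\,|a|^{2\alpha-1}$, i.e.\ arithmetic--geometric mean. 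For (iii), $(|a|^{2\alpha-1}+|b|^{2\alpha-1})(|a|+|b|)=|a|^{2\alpha}+|b|^{2\alpha}+|a|^{2\alpha-1}|b|+|a|\,|b|^{2\alpha-1}$, and assuming $|a|\ge|b|$ the last two terms are each at most $|a|^{2\alpha}$, so the left side is at most $3|a|^{2\alpha}+|b|^{2\alpha}\le4(|a|^\alpha+|b|^\alpha)^2$, which is exactly the right side. The only part I expect to require any care is keeping the sign bookkeeping uniform and handling the split $\alpha\in[\tfrac12,1]$ versus $\alpha\ge1$ in (iii); each individual inequality is a one-line computation.
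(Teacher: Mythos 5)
The paper does not prove this lemma: it is simply quoted from \cite[Lemma~A.1]{ADS15} in the appendix, so there is no in-text proof against which to compare. Your argument is correct and gives a clean, self-contained proof: the reduction to $a\ge b\ge 0$ (equal sign) and $a>0>b$ (opposite sign), the integral representation plus endpoint bound for (i), Cauchy--Schwarz for (ii), the midpoint lower bound on $\int_b^a t^{2\alpha-1}\,dt$ together with the concavity of $b\mapsto 3+b-3b^{2\alpha}-b^{2\alpha-1}$ on $[0,1]$ for $\alpha\ge1$ in (iii), and the elementary polynomial/AM--GM comparisons in the opposite-sign cases all check out.

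Two small points of bookkeeping worth tightening. First, for (ii) the left-hand side $(\tilde a_\alpha-\tilde b_\alpha)^2$ is a square and is never nonpositive; what actually disposes of $a<b$ is the symmetry of (ii) under $(a,b)\mapsto(b,a)$ (which you already invoke at the start), not nonpositivity --- your phrase \emph{``the remaining sub-case $a<b$ makes the left-hand side nonpositive''} applies only to (iii). Second, in (iii) the sub-case $0\ge a\ge b$ should be mapped to $a\ge b\ge0$ via $(a,b)\mapsto(-b,-a)$, which preserves both sides; the pure parity $(a,b)\mapsto(-a,-b)$ flips the sign of the factor $a-b$ on the left while leaving the right-hand side unchanged, so it does not by itself reduce that sub-case. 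Neither remark affects the validity of the proof.
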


\section{The two-dimensional case}\label{sec:2d}

In two dimensions Biskup \cite{Biskup} proved sublinearity of the corrector and the QFCLT under the minimal moment condition $p=q=1$ in \eqref{ass:moment}. The reasoning in \cite{Biskup} (which has its origins in \cite{BB07}) combines geometric, analytical and probabilistic arguments. In this section, we sketch a proof of Biskups result that relies only on deterministic regularity theory and the spatial ergodic theorem. The main ingredient is the following local boundedness result:
\begin{proposition}\label{P2d}
Fix $\omega\in(0,\infty)^{\mathbb B^2}$. Then there exists $c\in[1,\infty)$ such that solutions of $\nabla^*({\omega}\nabla u)=0$ in $\mathbb Z^2$ satisfy for every $n\in \mathbb N$
\begin{equation}\label{est:T:boundl12d}
 \max_{x\in B(n)}|u(x)|\leq c\left(n\|\omega^{-1}\|_{\underline L^1(B(2n))}^\frac12\|\omega (\nabla u)^2\|_{\underline L^1(B(2n))}^\frac12+\|u\|_{\underline L^{1}(B(2n))}\right).
\end{equation}

\end{proposition}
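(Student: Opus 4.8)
The plan is to run a Moser iteration adapted to $d=2$, following closely the structure of the proof of Theorem~\ref{T:bound} but exploiting the fact that in two dimensions the energy $\sum_\ee \omega(\ee)(\nabla u(\ee))^2$ over an annulus can be used directly, without having to pay for a high power of $\Lambda^\omega$. The key simplification is that for $d=2$ the Sobolev exponent on spheres (i.e.\ on the one-dimensional sets $\partial B(k)$) is trivial: $d-1=1$, so there is no Sobolev gain along $\partial B(k)$, but the one-dimensional cut-off optimization of Lemma~\ref{L:optimcutoff} still produces a favourable factor. Concretely, I would first establish a Caccioppoli-type inequality exactly as in Substep~1.1, giving for $\eta$ supported in $B(2n-1)$ with $\eta=1$ on $B(\rho)$ and $\eta=0$ outside $B(\sigma-1)$
\begin{equation*}
\sum_{\ee\in\mathbb B^2}\eta^2(\ee)\omega(\ee)(\nabla \tilde u_\alpha(\ee))^2\lesssim \alpha^2\sum_{\ee\in\mathbb B^2}\omega(\ee)(|u^\alpha|(\ee))^2(\nabla\eta(\ee))^2,
\end{equation*}
and then optimize the right-hand side over radial $\eta$ using Step~1 of Lemma~\ref{L:optimcutoff}, which in $d=2$ gives control by $(\sigma-\rho)^{-2}\sum_{k=\rho}^{\sigma-1}\sum_{\ee\in S(k)}\omega(\ee)|u^\alpha|(\ee)^2$ (the choice $\gamma=1$), hence by $(\sigma-\rho)^{-2}\|\omega\|_{L^1(B(\sigma)\setminus B(\rho-1))}\,\|u^\alpha\|_{L^\infty(B(\sigma))}^2$ — but I'd rather keep the $\ell^2$-in-$k$ form and use H\"older only at the very end.

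The main point of departure from Theorem~\ref{T:bound} is the base case $\alpha=1$. Here, instead of bounding $\sum_\ee \omega(\ee)(\nabla u(\ee))^2$ by a $\Lambda^\omega$-weighted Sobolev norm of $u$, I would simply \emph{keep} the quantity $\|\omega(\nabla u)^2\|_{\underline L^1(B(2n))}$ as it appears on the right-hand side of \eqref{est:T:boundl12d}: from the Caccioppoli inequality with a linear cut-off between $B(\rho)$ and $B(\sigma)$ one gets $\sum_{\ee\in B(\rho)}\omega(\ee)(\nabla u(\ee))^2 \lesssim (\sigma-\rho)^{-2}\sum_{\ee\in B(\sigma)}\omega(\ee)(|u|(\ee))^2$, which after one more Caccioppoli step (or directly) is $\lesssim n^2 \|\omega(\nabla u)^2\|_{L^1(B(2n))} + \text{lower order}$; more honestly, for the first iteration step I would bound $\|\nabla u\|_{L^{2q/(q+1)}}$-type quantities with $q=1$, i.e.\ $\|\nabla u\|_{L^1(B(\rho))}^2 \le \|\omega^{-1}\|_{L^1(B(\rho))}\sum_{\ee\in B(\rho)}\omega(\ee)(\nabla u(\ee))^2$, and recognize $\|\omega^{-1}\|_{\underline L^1(B(2n))}\|\omega(\nabla u)^2\|_{\underline L^1(B(2n))}$ directly. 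The higher steps $\alpha=\chi^\nu$ then only involve $u$ itself (through $|u^\alpha|$ on the right-hand side of Caccioppoli), so no further inverse moments of $\omega$ enter; in $d=2$ the Sobolev exponent $p_*$ collapses and the iteration exponent is $\chi = d/(d-1)=2$, giving geometric convergence $\alpha_\nu = 2^\nu$. One iterates from $B(2n)$ down to $B(n)$, summing the geometric series $\sum 2^{-\nu}(1+\nu)<\infty$ of the accumulated constants, and controls the volume factor $|B(n)|^{1/2^{\hat\nu(n)}}\lesssim 1$ exactly as in \eqref{est:finalvolume}.

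The step I expect to be the main obstacle is making the base case genuinely match the stated right-hand side: one must pass from the Caccioppoli estimate, whose right-hand side involves $\sum_\ee \omega(\ee)(|u|(\ee))^2 (\nabla\eta)^2$, to a bound in which only $\|\omega(\nabla u)^2\|_{\underline L^1}$, $\|\omega^{-1}\|_{\underline L^1}$ and $\|u\|_{\underline L^1}$ appear, with the correct powers $\tfrac12,\tfrac12$ on the first two. The natural route is: start the Moser iteration not from $u$ but from $\tilde u_\alpha$ with $\alpha$ slightly bigger than $1$, so that the $\nu=0$ step can absorb a factor $\|u\|_{L^\infty}^{2(1-1/\alpha)}$ by Young's inequality (as in the proof of Corollary~\ref{C:est:gamma}), while the genuinely ``energy'' contribution $n\|\omega^{-1}\|^{1/2}\|\omega(\nabla u)^2\|^{1/2}$ is produced once, at the bottom scale, via H\"older with exponents $(2q,\ldots)=(2,2)$ since $q=1$. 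Equivalently, one runs the iteration to get $\|u\|_{L^\infty(B(n))}\lesssim n\|\omega^{-1}\|_{\underline L^1(B(2n))}^{1/2}\|\nabla u\|_{\ell^2_\omega(B(2n))}^{?}\cdots$ and then uses the interpolation trick of Corollary~\ref{C:est:gamma} (with $\gamma$ chosen so that $2p'\gamma=1$, here $p=q=1$ formally, $p'=\infty$, so one takes $\gamma\to 0$ carefully or argues directly with the $L^1$ norm) to convert an $L^{2}$-type average of $u$ into the $L^1$ average in \eqref{est:T:boundl12d}. Keeping the constants uniform in $n$ and checking the two-dimensional Sobolev inequalities (\eqref{est:sobolev:bulk} and \eqref{est:sobolev:sphere} with $d=2$, where \eqref{est:sobolev:sphere} degenerates and must be replaced by the one-dimensional embedding $\|f\|_{L^\infty(\partial B(k))}\lesssim \|\nabla f\|_{L^1(\partial B(k))}+k^{-1}\|f\|_{L^1(\partial B(k))}$ on each circle) are the routine-but-delicate parts.
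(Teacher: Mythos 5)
Your proposal goes down a fundamentally different road from the paper, and unfortunately it is a road that does not reach the destination: the case $d=2$, $p=q=1$ is exactly \emph{critical} for the Moser scheme of Theorem~\ref{T:bound}. The one-step gain there is $\chi=1+\delta$ with $\delta=\frac{1}{d-1}-\frac{1}{2p}-\frac{1}{2q}$, and for $d=2$, $p=q=1$ this gives $\delta=0$, i.e.\ $\chi=1$: the Sobolev gain on circles is exactly cancelled by the two H\"older losses (one to pass from the weighted energy $\sum\eta^2\omega(\nabla\tilde u_\alpha)^2$ to an unweighted gradient norm, costing $\|\omega^{-1}\|_{L^1}$, and one to pass from $\sum\omega(|u^\alpha|)^2(\nabla\eta)^2$ to a norm of $u^\alpha$, costing $\|\omega\|_{L^1}$ — and with $p=1$ the dual exponent is $p'=\infty$, so the right-hand side of Caccioppoli already involves $\|u^\alpha\|_{L^\infty}^2$, which is useless for an upward iteration). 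Your assertion that ``the iteration exponent is $\chi=d/(d-1)=2$'' is the uniformly elliptic gain; it does not survive the H\"older steps when only first moments of $\omega$ and $\omega^{-1}$ are available. This is consistent with the fact that the standing hypothesis $\frac1p+\frac1q<\frac{2}{d-1}$ of Theorem~\ref{T1} reads $\frac1p+\frac1q<2$ for $d=2$ and \emph{fails} (with equality) at $p=q=1$. No rearrangement of the base case (Young, interpolation with $\gamma\to0$, starting from $\tilde u_\alpha$ with $\alpha>1$) can repair a scheme whose per-step improvement is trivial.

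The paper's proof of Proposition~\ref{P2d} is instead completely elementary and uses an ingredient absent from your proposal: the \emph{maximum principle} for $\nabla^*(\omega\nabla u)=0$, which gives $\max_{B(n)}|u|\leq\max_{\partial B(k)}|u|$ for every $k\in\{n,\dots,2n\}$. One then selects by averaging a good radius $\tilde k\in\{n,\dots,2n\}$ with $\|\nabla u\|_{L^1(\partial B(\tilde k))}+\frac1n\|u\|_{L^1(\partial B(\tilde k))}\leq\frac1n\bigl(\|\nabla u\|_{L^1(B(2n))}+\frac1n\|u\|_{L^1(B(2n))}\bigr)$, applies the one-dimensional Sobolev embedding $\max_{\partial B(\tilde k)}|u|\lesssim\|\nabla u\|_{L^1(\partial B(\tilde k))}+\frac1{\tilde k}\|u\|_{L^1(\partial B(\tilde k))}$ on that single circle (this is the correct two-dimensional replacement for \eqref{est:sobolev:sphere}, used directly on $u$ rather than inside a cutoff optimization), and finishes with Cauchy--Schwarz, $\|\nabla u\|_{L^1}\leq\|\omega^{-1}\|_{L^1}^{1/2}\|\omega(\nabla u)^2\|_{L^1}^{1/2}$. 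Note also that the resulting estimate \eqref{est:T:boundl12d} is not of the pure $L^\infty$--$L^1$ form of Theorem~\ref{T1}: it retains the energy $\|\omega(\nabla u)^2\|_{\underline L^1}$ on the right-hand side, which is another sign that a self-contained Moser iteration bounding $\max|u|$ by $\|u\|_{\underline L^1}$ alone is not what is being (or can be) proved here. If you want to salvage your write-up, replace the iteration entirely by the maximum-principle/good-circle argument.
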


\begin{proof}[Proof of Proposition~\ref{P2d}]
Throughout the proof we write $\lesssim$ if $\leq$ holds up to a generic positive constant.

The proof is elementary and relies on three ingredients: First, $\nabla^*({\omega}\nabla u)=0$ in $\mathbb Z^2$ implies a maximum principle in the form
\begin{equation}\label{est:P2d:1}
 \max_{x\in B(n)}|u(x)|\leq  \max_{x\in B(k)}|u(x)|\leq \max_{x\in \partial B(k)}|u(x)|\qquad\mbox{for all  $k\in\{n,\dots,2n\}$}.
\end{equation}
Secondly, since
$$
 \sum_{k=n}^{2n}\left(\|\nabla u\|_{L^1(\partial B(k))}+\frac1n\|u\|_{L^1(\partial B(k))}\right)\leq \|\nabla u\|_{L^1(B(2n))}+\frac1n\|u\|_{L^1(B(2n))},
$$
we can choose a 'good' $\tilde k\in\{n,\dots,2n\}$ satisfying
\begin{equation}\label{est:P2d:2}
 \|\nabla u\|_{L^1(\partial B(\tilde k))}+\frac1n\|u\|_{L^1(\partial B(\tilde k))}\leq \frac1n\left(\|\nabla u\|_{L^1(B(2n))}+\frac1n\|u\|_{L^1(B(2n))}\right).
\end{equation}
The last ingredient is a one-dimensional Sobolev inequality (which follows simply by the discrete version of the fundamental theorem of calculus)
\begin{equation}\label{est:P2d:3}
 \max_{x\in \partial B(k)}|u(x)|\lesssim \|\nabla u\|_{L^1(\partial B( k))}+\frac1k\|u\|_{L^1(\partial B(k))}\qquad\mbox{for all  $k\in\{n,\dots,2n\}$}.
\end{equation}
Combining \eqref{est:P2d:1}-\eqref{est:P2d:3} and $\tilde k\in\{n,\dots,2n\}$, we obtain,
\begin{align*}
\max_{x\in B(n)}|u(x)|\stackrel{\eqref{est:P2d:1}}{\leq}  \max_{x\in \partial B(\tilde k)}|u(x)|\stackrel{\eqref{est:P2d:3}}{\lesssim}  \|\nabla u\|_{L^1(\partial B(\tilde k))}+\frac1n\|u\|_{L^1(\partial B(\tilde k))}\stackrel{\eqref{est:P2d:2}}{\lesssim} n\|\nabla u\|_{\underline L^1(B(2n))}+\|u\|_{\underline L^1(B(2n))},
\end{align*}
where we used in the last inequality also the fact $|B(2n)|\lesssim n^2$. Clearly, \eqref{est:T:boundl12d} follows from the last displayed formula and H\"older's inequality.
\end{proof}

\begin{proposition}\label{P:sublinlinfty:2d}
Let $d=2$ and suppose that part~(i) and (ii) of Assumptions~\ref{ass} are satisfied. Moreover, suppose that $\mathbb E[\omega(\ee)]<\infty$ and $\mathbb E[\omega(\ee)^{-1}]<\infty$  for every $\ee\in\mathbb B^2$. Then, for every $j\in\{1,\dots,d\}$ 
\begin{equation*}
\lim_{n\to\infty}\frac1n\max_{x\in B(n)}|\chi_j(\omega,x)|=0\qquad\mbox{$\mathbb P$-a.s.}
\end{equation*}
\end{proposition}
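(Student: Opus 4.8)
The plan is to mirror the proof of Proposition~\ref{P:sublinlinfty} almost verbatim, replacing the use of Theorem~\ref{T1} by the two-dimensional local boundedness result Proposition~\ref{P2d}, and replacing the moment input with the mere integrability of $\omega$ and $\omega^{-1}$ together with the additional integrability of the energy density $\omega(\nabla\chi)^2$ supplied by the construction of the corrector. First I would record, as in Step~1 of the earlier proof, the consequences of the spatial ergodic theorem: for $\mathbb P$-a.e.\ $\omega$ and every $z\in\mathbb Z^2$ the averages $\|\omega^{-1}\|_{\underline L^1(B(nz,n))}$ converge to $\mathbb E[\omega(\ee)^{-1}]<\infty$, and — crucially — the averaged energy density converges as well. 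Indeed, since $\chi_j\in L^2_{\rm cov}$, the function $\omega\mapsto\sum_i\omega(0,e_i)\Phi_i(\omega)^2$ is in $L^1(\mathbb P)$, and because $\omega\nabla u$ for $u=\Pi_j-\chi_j$ is, up to the cocycle structure, a stationary field, the ergodic theorem gives $\limsup_{n\to\infty}\|\omega(\nabla u)^2\|_{\underline L^1(B(nz,n))}<\infty$ for $\mathbb P$-a.e.\ $\omega$, uniformly over $z$ in any fixed finite box.

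Next I would run the same multiscale covering argument. Fix $m\in\mathbb N$; for $n\gg m$ cover $B(n)$ by boxes $B(\lfloor\tfrac nm\rfloor z,\lfloor\tfrac nm\rfloor)$ with $z\in B(m)$, and on each such box apply Proposition~\ref{P2d} to the $\mathcal L^\omega$-harmonic function $u_j^z(x):=\chi_j(\omega,x)-e_j\cdot(x-\lfloor\tfrac nm\rfloor z)=-\Phi_j(\omega,x)+e_j\cdot\lfloor\tfrac nm\rfloor z$. Note $\nabla u_j^z=\nabla\chi_j-e_j=-\nabla\Phi_j$, so $\omega(\nabla u_j^z)^2=\omega(\nabla\Phi_j)^2$ is exactly the quantity controlled by the ergodic theorem; this is the reason the extra energy term in \eqref{est:T:boundl12d} is harmless here. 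Estimate \eqref{est:T:boundl12d} then gives
\begin{equation*}
\|u_j^z\|_{L^\infty(B(\lfloor\frac nm\rfloor z,\lfloor\frac nm\rfloor))}\lesssim \tfrac nm\,\|\omega^{-1}\|_{\underline L^1}^{1/2}\|\omega(\nabla\Phi_j)^2\|_{\underline L^1}^{1/2}+\|\chi_j\|_{\underline L^1}+\tfrac nm,
\end{equation*}
with all norms taken over $B(\lfloor\tfrac nm\rfloor z,2\lfloor\tfrac nm\rfloor)$. Summing over the finitely many $z\in B(m)$ and bounding $\|\chi_j\|_{\underline L^1(B(\lfloor\frac nm\rfloor z,2\lfloor\frac nm\rfloor))}\lesssim m^2\|\chi_j\|_{\underline L^1(B(2n))}$ yields, exactly as in \eqref{est:phiexi:1},
\begin{equation*}
\tfrac1n\|\chi_j\|_{L^\infty(B(n))}\lesssim \tfrac1m\Big(\sup_{z}\|\omega^{-1}\|_{\underline L^1}^{1/2}\|\omega(\nabla\Phi_j)^2\|_{\underline L^1}^{1/2}\Big)+m^2\tfrac1n\|\chi_j\|_{\underline L^1(B(2n))}+\tfrac1m.
\end{equation*}

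Finally I would take $n\to\infty$: the middle term vanishes by $L^1$-sublinearity of $\chi_j$ (Proposition~\ref{P:propprevious}(iii), which holds under these weaker two-dimensional hypotheses), and the $\limsup$ of the first term is $\lesssim\tfrac1m$ times a finite deterministic constant by the ergodic theorem input of the first step. Hence $\limsup_n\tfrac1n\|\chi_j\|_{L^\infty(B(n))}\lesssim\tfrac1m$ for every $m$, and letting $m\to\infty$ gives the claim. The only genuinely new point compared with the $d\geq3$ argument — and the step I expect to require the most care — is verifying that $\|\omega(\nabla\Phi_j)^2\|_{\underline L^1(B(nz,n))}$ stays bounded along the relevant scales for a.e.\ $\omega$; this needs the identification of $\nabla\Phi_j(\omega,\cdot)$ with the stationary realization of the $L^2_{\rm cov}$-element $\Phi_j$, so that the ordinary (Birkhoff) ergodic theorem applies to the integrable field $\omega\mapsto\sum_i\omega(0,e_i)\Phi_i(\omega)^2$. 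With that in hand the rest is a routine repetition of the computations in Proposition~\ref{P:sublinlinfty}.
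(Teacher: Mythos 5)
Your proposal is correct and follows essentially the same route as the paper: the paper also introduces the stationary, $\mathbb P$-integrable energy density $\iota_j^\omega(x)=\sum_y\omega(x,y)(\Phi_j(\omega,y)-\Phi_j(\omega,x))^2$ (whose stationarity comes from the cocycle property and whose integrability from $\|\Phi_j\|_{L^2_{\rm cov}}\leq\|\Pi_j\|_{L^2_{\rm cov}}$), applies the spatial ergodic theorem to it, and then repeats the covering argument of Proposition~\ref{P:sublinlinfty} with Proposition~\ref{P2d} in place of Theorem~\ref{T1}. The one point you flag as needing care — identifying $\omega(\nabla\Phi_j)^2$ with a stationary integrable field — is exactly the content of the paper's Step~1, so nothing is missing.
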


\begin{proof}[Proof of Proposition~\ref{P:sublinlinfty:2d}]
Throughout the proof we write $\lesssim$ if $\leq$ holds up to a generic positive constant. 

\step 1 More ergodic theory. In contrast to \eqref{est:T:boundl1} the right-hand side of \eqref{est:T:boundl12d} depends on the discrete gradient of the $\mathcal L^\omega$-harmonic function. In the application to the corrector equation $\nabla^*(\omega \nabla \Phi_j)=0$ we use the ergodic theorem to control terms coming from $\nabla \Phi_j$. For this it is convenient to introduce the following measures on $\mathbb Z^d$:
\begin{equation*}
 \iota_j^\omega(x):=\sum_{y\in\mathbb Z^d}\omega(x,y)(\chi_j(\omega,y)-\Pi_j(\omega,y)-(\chi_j(\omega,x)-\Pi_j(\omega,x)))^2\stackrel{\eqref{eq:defchij}}{=}\sum_{y\in\mathbb Z^d}\omega(x,y)(\Phi_j(\omega,y)-\Phi_j(\omega,x))^2,
\end{equation*}
where $j\in\{1,\dots,d\}$ and $\Pi_j$ denotes the position field introduced in Section~\ref{subsec:old}. Since $\Phi_j$ is defined as a projection of $\Pi_j$ on $L^2_{\rm sol}$ in $L^2_{\rm cov}$, we have
\begin{equation}\label{est:iotabound}
\mathbb E[\iota(0)]=\mathbb E\biggl[\sum_{y\in\mathbb Z^d}\omega(0,y)\Phi_j(y)^2\biggr]=\|\Phi_j\|_{L^2_{\rm cov}}^2\leq\|\Pi_j\|_{L^2_{\rm cov}}^2\leq\mathbb E[\mu(0)],
\end{equation}
where we use $\Phi_j(\omega,0)=0$ for every $\omega$ (which follows directly from the cocycle property). Moreover, appealing to the cocycle property of $\Phi_j$, we have $\iota^{\tau_z\omega}(x)=\iota^\omega(x+z)$ for every $x,z\in\mathbb Z^d$ and thus, by \eqref{est:iotabound} and the spatial ergodic theorem, we find a set $\Omega'\in \Omega$ with $\mathbb P[\Omega']=1$ such that for all $\omega\in\Omega'$ and for every $z\in \mathbb Z^d$
\begin{equation}\label{eqgodic:iota}
 \lim_{n\to\infty}\|\iota^\omega\|_{\underline L^1(B(nz,n))}=\mathbb E[\iota^\omega(0)]\leq\mathbb E[\mu(0)]<\infty.
\end{equation}
\step 2 From now on we use the notation of Step~2 in the proof of Proposition~\ref{P:sublinlinfty}. Using estimate \eqref{est:T:boundl12d} instead of \eqref{est:T:boundl1}, we obtain 
\begin{align}\label{moser:ui:2d}
&\|u_j^z\|_{L^\infty(B(\lfloor \frac{n}{m}\rfloor z,\lfloor \frac{n}{m}\rfloor))}\notag\\
\lesssim& \lfloor \tfrac{n}{m}\rfloor\|\omega^{-1}\|_{\underline L^1(B(\lfloor \frac{n}{m}\rfloor z,\lfloor \frac{n}{m}\rfloor))}^\frac12\|\omega (\nabla \chi_j-\nabla \Pi_j)^2\|_{\underline L^1(B(\lfloor \frac{n}{m}\rfloor z,2\lfloor \frac{n}{m}\rfloor))}^\frac12+\|\chi_j\|_{\underline L^1(B(\lfloor \frac{n}{m}\rfloor z,2\lfloor \frac{n}{m}\rfloor))}+\lfloor \tfrac{n}m\rfloor,
\end{align}
instead of \eqref{moser:ui}. Estimate \eqref{moser:ui:2d} implies the following $L^\infty$-estimate on $\chi_j$
\begin{eqnarray}\label{est:phiexi:1:2d}
 & &\|\chi_j\|_{L^\infty(B(n))}\lesssim  \sup_{z\in B(m)}\|u_j^z\|_{L^\infty(B(\lfloor \frac{n}{m}\rfloor z,\lfloor \frac{n}{m}\rfloor))}+\lfloor \tfrac{n}m\rfloor\notag\\
 &\stackrel{\eqref{moser:ui:2d}}{\lesssim}& \sup_{z\in B(m)}\left(\lfloor \tfrac{n}{m}\rfloor\|\omega^{-1}\|_{\underline L^1(B(\lfloor \frac{n}{m}\rfloor z,\lfloor \frac{n}{m}\rfloor))}^\frac12\|\omega (\nabla \Phi_j)^2\|_{\underline L^1(B(\lfloor \frac{n}{m}\rfloor z,2\lfloor \frac{n}{m}\rfloor))}^\frac12+\|\chi_j\|_{\underline L^1(B(\lfloor \frac{n}{m}\rfloor z,2\lfloor \frac{n}{m}\rfloor))}\right)+\lfloor \tfrac{n}m\rfloor\notag\\
 &\lesssim& \lfloor \tfrac{n}{m}\rfloor\sup_{z\in B(m)}\|\omega^{-1}\|_{\underline L^1(B(\lfloor \frac{n}{m}\rfloor z,\lfloor \frac{n}{m}\rfloor))}^\frac12\|\omega (\nabla \Phi_j)^2\|_{\underline L^1(B(\lfloor \frac{n}{m}\rfloor z,2\lfloor \frac{n}{m}\rfloor))}^\frac12+m^{d}\|\chi_j\|_{\underline L^1(B(2n)}+\lfloor \tfrac{n}m\rfloor.
\end{eqnarray}
The ergodic theorem in the versions \eqref{eq:ergodic} and \eqref{eqgodic:iota} implies that $\mathbb P$-a.s.
\begin{align}\label{limsupLambdae2d}
&\limsup_{n\to\infty}\max_{z\in B(m)}\|\omega^{-1}\|_{\underline L^1(B(\lfloor \frac{n}{m}\rfloor z,\lfloor \frac{n}{m}\rfloor))}^\frac12\|\omega (\nabla \Phi_j)^2\|_{\underline L^1(B(\lfloor \frac{n}{m}\rfloor z,2\lfloor \frac{n}{m}\rfloor))}^\frac12\notag\\
\lesssim&  \mathbb E[\nu(0)]^\frac12\mathbb E[\iota_j(0)]^\frac12=\mathbb E[\nu(0)]^\frac12\mathbb E[\mu(0)]^\frac12.
\end{align}
Combining \eqref{est:phiexi:1:2d}, \eqref{limsupLambdae2d} and the $L^1$-sublinearity of $\chi_j$ \eqref{eq:sublinl1}, we obtain
$$
\limsup_{n\to\infty}\frac1n\|\chi_j\|_{L^\infty(B(n))}\lesssim \frac1m(1+\mathbb E[\nu(0)]^\frac12\mathbb E|\mu(0)]^\frac12).
$$
The arbitrariness of $m\in\mathbb N$ yields the desired claim.

\end{proof}

\subsection*{Acknowledgement} The authors were supported by the German Science Foundation DFG in context of the Emmy Noether Junior Research Group BE 5922/1-1.


\begin{thebibliography}{99}
\bibitem{ABDH13} S.\ Andres, M.\  T.\ Barlow, J.-D.\ Deuschel and B.\ M.\ Hambly, Invariance principle for the random conductance model, \textit{Probab.\ Theory Related Fields} {\bf156} (2013), no. 3-4, 535--580.
\bibitem{ACDS18} S.\ Andres, A.\ Chiarini, J.-D.\ Deuschel and M.\ Slowik, Quenched invariance principle for random walks with time-dependent ergodic degenerate weights, \textit{Ann.\ Probab.} {\bf 46} (2018), no. 1, 302--336. 
\bibitem{ADS15} S.\ Andres, J.-D.\ Deuschel and M.\ Slowik, Invariance principle for the random conductance model in a degenerate ergodic environment, \textit{Ann.\ Probab.} {\bf 43} (2015), no. 4, 1866--1891.
\bibitem{ADS16} S.\  Andres, J.-D-\ Deuschel and M.\ Slowik, Harnack inequalities on weighted graphs and some applications to the random conductance model, \textit{Probab.\ Theory Related Fields} {\bf 164} (2016), no. 3-4, 931--977.
\bibitem{AN17} S.\ Andres and S.\ Neukamm, Berry-Esseen Theorem and Quantitative homogenization for the Random Conductance Model with degenerate Conductances, \textit{Stoch PDE: Anal Comp} (2018). https://doi.org/10.1007/s40072-018-0127-8.
\bibitem{AD18} S.\ N.\ Armstrong and P.\ Dario, Elliptic regularity and quantitative homogenization on percolation clusters, \textit{Comm. Pure Appl. Math.} {\bf 71} (2018), no.~9, 1717--1849. 
\bibitem{BM15} M.\ Ba and P.\ Mathieu, A Sobolev inequality and the individual invariance principle for diffusions in a periodic potential, \textit{SIAM J.\ Math.\ Anal.} {\bf 47} (2015), no.~3, 2022--2043. 
\bibitem{BBT16} M.\ Barlow, K.\ Burdzy and \'A. Tim\'ar, Comparison of quenched and annealed invariance principles for random conductance model, \textit{Probab.\ Theory Related Fields} {\bf 164} (2016), no. 3-4, 741--770.
\bibitem{BD10} M.\ T.\ Barlow and J.-D.\ Deuschel, Invariance principle for the random conductance model with unbounded conductances, \textit{Ann. Probab.} {\bf 38} (2010), no.~1, 234--276.
\bibitem{BB07} N.\ Berger and M.\ Biskup, Quenched invariance principle for simple random walk on percolation clusters, \textit{Probab.\ Theory Related Fields} {\bf137} (2007), no.~1-2, 83--120. 
\bibitem{BFO18} P.\ Bella, B.\ Fehrman and F.\ Otto, A Liouville theorem for elliptic systems with degenerate ergodic coefficients, \textit{Ann. Appl. Probab.} {\bf 28} (2018), no.3, 1379--1422.
\bibitem{BS19a} P.\ Bella and M.\ Sch\"affner, Local boundedness and Harnack inequality for solutions of linear non-uniformly elliptic equations, \textit{arXiv:1901.07958}.
\bibitem{Biskup} M.\ Biskup, Recent progress on the random conductance model, \textit{Probab.\ Surv.} \textbf 8, 294--373 (2011). 
\bibitem{BK14} M.\ Biskup and T.\ Kumagai, Quenched Invariance Principle for a class of random conductance models with long-range jumps, \textit{ arXiv:1412.0175}
\bibitem{BR18} M.\ Biskup and P.-F.\ Rodriguez, Limit theory for random walks in degenerate time-dependent random environments, \textit{J. Funct. Anal.} {\bf 274} (2018), no. 4, 985--1046. 
\bibitem{Brezis} H.\ Brezis, \textit{Functional analysis, Sobolev spaces and partial differential equations}, Universitext. Springer, New York, 2011.
\bibitem{CD16} A.\ Chiarini and J.-D.\ Deuschel, Invariance principle for symmetric diffusions in a degenerate and unbounded stationary and ergodic random medium, \textit{Ann. Inst. Henri Poincaré Probab. Stat.} {\bf 52} (2016), no.\ 4, 1535--1563.
\bibitem{B93} D.\ Boivin, Weak convergence for reversible random walks in a random environment, \textit{Ann.\ Probab.} {\bf 21} (1993), no.~3, 1427--1440.
\bibitem{DMFGW89} A.\ De Masi, P.\ A.\ Ferrari, S.\ Goldstein and W.\ D.\ Wick, An invariance principle for reversible Markov processes. Applications to random motions in random environments, \textit{J.\ Statist.\ Phys.} {\bf 55} (1989), no.~3-4, 787--855. 
\bibitem{DNS} J.-D.\ Deuschel, T.\ A.\ Nguyen and M.\ Slowik, Quenched invariance principles for the random conductance model on a random graph with degenerate ergodic weights, \textit{Probab. Theory Related Fields} {\bf 170} (2018), no.\ 1-2, 363--386. 
\bibitem{FSS98} B.\ Franchi, R.\ Serapioni and F.\ Serra Cassano, Irregular solutions of linear degenerate elliptic equations, \textit{Potential Anal.} {\bf 9} (1998), no. 3, 201--216.
\bibitem{FK97} A.\ Fannjiang and T.\ Komorowski, A martingale approach to homogenization of unbounded random flows, \textit{Ann. Probab.,} {\bf 25},  1872--1894 (1997).
\bibitem{Helland} I.\ S.\ Helland, Central limit theorems for martingales with discrete or continuous time, \textit{Scand.\ J.\ Statist.} {\bf 9} (1982), no. 2, 79--94. 
\bibitem{KV86} C.\ Kipnis and S.\ R.\ S.\ Varadhan, Central limit theorem for additive functionals of reversible Markov processes and applications to simple exclusions, \textit{Comm.\ Math.\ Phys.} {\bf 104} (1986), no.1, 1--19.
\bibitem{Krengel} U.\ Krengel, \textit{Ergodic Theorems}, de Gruyter Studies in Mathematics, 6, Walter de Guyter, Berlin/New York, 1985.
\bibitem{Kumagai} T.\ Kumagai, \textit{Random walks on disordered media and their scaling limits}, In Lecture Notes from the 40th Probability Summer School Held in Saint-Flour 2010. Springer, Cham, 2014.
\bibitem{M08} P.\ Mathieu, Quenched invariance principles for random walks with random conductances, \textit{J.\ Stat.\ Phys.} {\bf 130} (2008), no.5, 1025--1046.
\bibitem{MP07} P.\ Mathieu and A.\ Piatnitski, Quenched invariance principles for random walks on percolation clusters, \textit{Proc.\ R.\ Soc.\ Lond.\ Ser.\ A Math.\ Phys.\ Eng.\ Sci.} {\bf 463} (2007), no.~2085, 2287--2307.
\bibitem{MO16} J.-C.\ Mourrat and F.\ Otto, Anchored Nash inequalities and heat kernel bounds for static and dynamic degenerate environments, \textit{J.\ Funct.\ Anal.,} {\bf 270} (2016), no. 1, 201--228. 
\bibitem{Moser60} J.\ Moser, A new proof of De Giorgi's theorem concerning the regularity problem for elliptic differential equations, \textit{Comm.\ Pure Appl.\ Math.,} {\bf 13}, 457--468 (1960).
\bibitem{Moser61} J.\ Moser, On Harnack's theorem for elliptic differential equations, \textit{Comm.\ Pure Appl.\ Math.,} {\bf 14}, 577--591 (1961).
\bibitem{MS68} M.\ K.\ V.\ Murthy and G.\ Stampacchia, Boundary value problems for some degenerate-elliptic operators, \textit{Ann. Mat. Pura Appl.} (4) {\bf 80} (1968, 1--122.
\bibitem{NSS17} S.\ Neukamm, M.\ Sch\"affner, and A.\ Schl\"omerkemper, Stochastic homogenization of nonconvex discrete energies with degenerate growth, \textit{SIAM J.\ Math.\ Anal.} {\bf 49} (2017), no. 3, 1761--1809.
\bibitem{SS04} V.\ Sidoravicius and A.-S.\ Sznitman, Quenched invariance principles for walks on clusters of percolation or among random conductances, \textit{Probab.\ Theory Related Fields} {\bf 129} (2004), no.~2, 219--244.
\bibitem{T71} N.\ Trudinger, On the regularity of generalized solutions of linear, non-uniformly elliptic equations, \textit{Arch.\ Rational Mech.\ Anal.} {\bf 42} (1971), 50--62.
\bibitem{T73} N.\ Trudinger, Linear elliptic operators with measurable coefficients, \textit{Ann.\ Scuola Norm.\ Sup.\ Pisa}  (3) {\bf 27} (1973), 265--308. 
\end{thebibliography}
\end{document}